\numberwithin{equation}{section}
\theoremstyle{plain}
\newtheorem{Th}{Theorem}[section]
\newtheorem{Lemma}[Th]{Lemma}
\newtheorem{Prop}[Th]{Proposition}
\newenvironment{customthm}[1]
  {\innercustomthm}
  {\endinnercustomthm}
\theoremstyle{definition}
\newtheorem{Def}[Th]{Definition}
\newtheorem{?}[Th]{Problem}
\theoremstyle{remark}
\newtheorem*{remark}{Remark}
\def\R{\mathbb R}
\def\N{\mathbb N}
\def\IDiam{\operatorname{IDiam}}
\def\bdiam{\operatorname{bdiam}}
\DeclarePairedDelimiter\parentheses{\lparen}{\rparen}
\newcommand{\interior}[1]{\operatorname{int} \parentheses*{#1}}
\begin{document}

\title[ITFF Equivalent to IDiam]{Intrinsic Tame Filling Functions are Equivalent to Intrinsic Diameter Functions}

\author[A. Hayes]{Andrew Hayes}

\maketitle

\begin{abstract}
Intrinsic tame filling functions are quasi-isometry invariants that are refinements of the intrinsic diameter function of a group.  The main purpose of this paper is to show that every finite presentation of a group has an intrinsic tame filling function that is equivalent to its intrinsic diameter function.  We also introduce some alternative filling functions---based on concepts similar to those used to define intrinsic tame filling functions---that are potential proper refinements of the intrinsic diameter function.

\bigskip

\noindent 2020 Mathematics Subject Classification:  20F65; 20F05, 20F06.

\bigskip

\noindent Key words: Filling functions, isodiametric functions, tame filling functions.
\end{abstract}

\section{Introduction}

Filling functions are quasi-isometry invariants for groups that describe the growth of combinatorial and geometric properties of van Kampen diagrams.  Because they are defined using van Kampen diagrams, any given filling function is only defined for a specific presentation of the group.  However, under an appropriate equivalence relation (see Definition~\ref{equivalent definition}), the equivalence class of a filling function for a presentation is a quasi-isometry invariant, and therefore a group invariant.  These invariants often tell us something about the complexity of the word problem for the group.  For an introduction to filling functions, see \cite[Chapter II]{FFs} by Riley.

One well-studied filling function is the intrinsic diameter (or isodiametric) function, which measures the diameter of van Kampen diagrams.  The intrinsic diameter function is one of several filling functions that is recursive if and only if the word problem of the group is solvable (see \cite[Th. 2.1]{lineardiam}).  As a result, there are groups whose intrinsic diameter functions grow faster than any recursive function.  It is also known that the collection of intrinsic diameter functions includes a variety of different kinds of growth.  For example, \cite[Cor. 1.4]{functions_that_can_be_IDiam} shows that this collection includes a wide variety of power functions.

The intrinsic diameter function is intrinsic in the sense that the diameter is measured using the path metric on the 1-skeleton of the van Kampen diagram itself.  Replacing this metric with the path metric of the Cayley graph gives an extrinsic diameter function, which is bounded above by the intrinsic diameter function.  Extrinsic diameter functions were introduced by Bridson and Riley in \cite{IDvsED}, where they also showed that there can exist arbitrarily large polynomial gaps between intrinsic and extrinsic diameter functions: for any $\alpha > 0$, there is a finite presentation with intrinsic diameter function bounded below by a polynomial $p$ and extrinsic diameter function bounded above by a polynomial $q$, with $\text{deg}(p) - \text{deg}(q) = \alpha$.

However, many groups are known to have at most linear intrinsic, and therefore also at most linear extrinsic, diameter functions, meaning that the equivalence class of the diameter functions cannot distinguish between these groups.  Among them are all combable groups and almost convex groups \cite[Prop. 3.4, Cor. 4.3] {lineardiam}, and therefore all automatic groups \cite[Lemma 2.3.2]{wordproc}.  These classes include word hyperbolic groups \cite[Th. 3.4.5]{wordproc}, groups acting geometrically on CAT(0) cube complexes \cite{CAT0}, and Euclidean groups \cite[Cor. 4.1.6]{wordproc}, \cite[Th. 2.5]{AC}; Coxeter groups \cite{Coxeter, CoxeterAC}; Artin groups of finite type \cite{Artinfin} and sufficiently large type \cite{Artinsuff}; and mapping class groups of surfaces of finite type \cite{MCGs}, among others.  The fundamental groups of all compact 3-manifolds also have at most linear diameter functions \cite[Th. 3.5]{lineardiam}.  The class of groups with at most linear diameter functions is also closed under graph products \cite{IDiamgraphprod}.

In part because such a large class of groups have linear diameter functions, Brittenham and Hermiller defined new quasi-isometry invariants in \cite{TFFs} called intrinsic and extrinsic tame filling functions.  The following is an informal description of tame filling functions; see Section~2 for formal definitions.  Tame filling functions are defined using a \emph{filling} of a presentation---a choice of a van Kampen diagram for each word over the generators that is equal to the identity in the group---and a \emph{1-combing} for each of these van Kampen diagrams, which amounts to a choice of continuously-varying paths from the basepoint of a van Kampen diagram to its boundary.  Given a point on one of these paths a distance~$n$ from the basepoint of the diagram, a tame filling function $f$ mandates that no prior point on the path can be a distance more than $f(n)$ from the basepoint; a 1-combing satisfying this property is called \emph{$f$-tame}.  Informally, a van Kampen diagram can be considered a terrain where elevation is represented by distance from the basepoint.  A group with a slow-growing tame filling function is intended to have van Kampen diagrams that are relatively smooth inclines, extending steadily outward from the basepoint to the boundary.  A group with a fast-growing tame filling function, on the other hand, may require van Kampen diagrams with tall mountains and deep valleys.

Brittenham and Hermiller showed that any intrinsic or extrinsic tame filling function for a presentation grows at least as fast as the respective diameter function, leaving open the question of whether or not tame filling functions ever grow strictly faster than their respective diameter functions, especially in the case of groups with linear diameter functions.  In fact, it was not even known if every group admits a finite-valued intrinsic or extrinsic tame filling function, whereas every group admits a finite-valued intrinsic and extrinsic diameter function by definition.  The main purpose of this paper is to prove the following theorem, and thereby resolve these questions for intrinsic tame filling functions:

\begin{customthm}{\ref{main}} Given a finite presentation $\mathcal{P} = \langle A | R \rangle$ such that for all $a \in A$, $a$ is not equal to the identity, there is an intrinsic tame filling function for $\mathcal{P}$ that is equivalent to the intrinsic diameter function of $\mathcal{P}$.
\end{customthm}

As a result, all groups admit a finite-valued intrinsic tame filling function.  This was proven concurrently by Nu'Man and Riley; they showed that, given a finite presentation, if $D:\N \to \N$ is the corresponding Dehn function, then $f(n) = D(e^n)$ is an intrinsic tame filling function for the presentation \cite{TimAnisahResult}.  However, since the Dehn function of a presentation grows at least as fast as, and often strictly faster than, the intrinsic diameter function (up to the equivalence relation defined in Section 2), Theorem~\ref{main} provides a tighter bound on the growth of minimal intrinsic tame filling functions.

Beyond that, Theorem~\ref{main} strengthens the notion of intrinsic diameter.  If $f$ is the intrinsic diameter function of a presentation, then by definition, for any word $w$ of length $n$, there exists a van Kampen diagram for $w$ with intrinsic diameter at most $f(n)$.  However, Theorem~\ref{main} implies the existence of an alternative van Kampen diagram for $w$ that is $g$-tame, for some function $g$ equivalent to $f$.  This provides an additional option with more structure than the diameter restriction, and that may prove more useful for some purposes.

Theorem~\ref{main} also shows that intrinsic tame filling functions do not distinguish between groups with equivalent intrinsic diameter functions, as they were defined, in part, to do.  This motivates us to define a couple of new filling functions that may fulfill the desired purpose.  Two different perspectives on what intrinsic tame filling functions measure provide direction in defining these new functions.  We show that both new functions grow at least as quickly as the intrinsic diameter function.  We leave it to future work to show whether or not they grow strictly faster than intrinsic diameter functions for some groups.

The questions resolved for intrinsic tame filling functions in this paper are still open in the case of extrinsic tame filling functions; it is quite possible that there exists a group whose extrinsic tame filling functions all grow strictly faster than its extrinsic diameter function.  This possibility is supported by a conjecture of Tschantz---that there exists a finitely presented group that is not tame combable \cite[p. 3]{TFFs}.  Brittenham and Hermiller showed that such a group would not even admit a finite-valued extrinsic tame filling function, and therefore it would not have an extrinsic tame filling function that is equivalent to its extrinsic diameter function.  Unfortunately, an argument similar to the one used in this paper would not be sufficient to answer these questions.  See the remark at the end of Section 6 for a brief explanation.

In Section 2, we give preliminary definitions and notation.

In Section 3, we define the \emph{icicles} of a spanning tree of a van Kampen diagram: each edge outside of the spanning tree connects two branches of the tree, forming a loop that bounds the icicle of that edge.  These icicles provide a natural way to choose 1-combings, and the simple ways in which they intersect each other provide a method for finding van Kampen diagrams with increasingly tame 1-combings.  

In Section 4, we construct 1-combings that follow the icicles of a spanning tree outward from the basepoint to the boundary of the diagram and show that they have certain nice properties.  Such 1-combings are particularly tame if the branches of the tree are geodesics, making them useful in the proof of Theorem ~\ref{main}.  

Section 5 is the proof of a technical lemma that provides the main strategy in the proof of Theorem ~\ref{main}.  It allows us to replace the body of an icicle with a diagram that has relatively small diameter while preserving distance in the rest of the diagram.  

Section 6 contains the remainder of the proof of Theorem ~\ref{main}, using the results of sections 3, 4, and 5.

In Section 7, we introduce new filling functions that are potential proper refinements of the intrinsic diameter function.

\section{Preliminary Definitions and Notation}

Most of the definitions from this section are taken from \cite[Chapter II]{FFs} and \cite{TFFs}.

Throughout this paper, let $\mathcal{P} = \langle A | R \rangle$ be a finite presentation of a group $G$, where for all $a \in A$, $a \neq_G 1$.  Let $X$ be the Cayley 2-complex of $\langle A | R \rangle$.  Given a word $w \in (A \cup A^{-1})^*$, let $\ell(w)$ denote the length of $w$.  Given an edge path $\gamma$ in a 2-complex, let $|\gamma|$ denote the length of $\gamma$, that is, the number of edges in $\gamma$.

Given a 2-complex $C$, define a \emph{coarse distance function} $d_C:C^{(0)} \times C \to \N[\frac{1}{4}]$ in the following way.  Let $v \in C^{(0)}$ and $x \in C$.  If $x \in C^{(0)}$, let $d_C(v,x)$ be the length of the shortest edge path between $v$ and $x$.  If $x \in C^{(1)} \setminus C^{(0)}$, then let $p$ and $q$ be the endpoints of the edge on which $x$ lies and let $$d_C(v,x) = \text{min}(d_C(v,p), d_C(v,q)) + \frac{1}{2}.$$  If $x \in C \setminus C^{(1)}$, then let $e_1, \dots, e_n$ be the 1-cells on the boundary of the 2-cell in which $x$ lies and for each $i \in [n]$ choose some $y_i \in e_i$.  Then let $$d_C(v,x) = \text{max}\{ d_C(v,y_i) | i \in [n] \} - \frac{1}{4}.$$

If $C$ is a 2-complex and $x,y \in C^{(0)}$, then a \emph{$C$-geodesic} from $x$ to $y$ is an edge path from $x$ to $y$ in $C^{(1)}$ of length $d_C(x,y)$.  If the complex being referred to is clear, we will just call such a path a geodesic.  If $T$ is a tree in $C$, and $* \in C^{(0)} \cap T$, we will say that $T$ is a \emph{tree of $C$-geodesics out of $*$} if, for every $x \in T^{(0)}$, the unique simple edge path from $*$ to $x$ in $T$ is a $C$-geodesic.  If $D \subseteq C$, we will write $E(D)$ for the set of 1-cells of $C$ contained in $D$.

A \emph{van Kampen diagram} $\Delta$ is a finite, contractible 2-complex embedded in $\R^2$ that has a specified basepoint $* \in \Delta^{(0)} \cap \partial \Delta$, with the following property.  For each 1-cell of $\Delta$, there are two corresponding directed edges, one labeled by an element $a \in A$ and the other labeled by $a^{-1}$, and there is a cellular map $\pi_\Delta:\Delta \to X$ that preserves the directions and labels of these edges.  It is possible to construct an edge circuit $\gamma$ starting and ending at $*$ by traveling around the boundary of $\Delta$ counterclockwise until an edge corresponding to every 1-cell in the boundary has been used.  Note that $\gamma$ may not be unique, since if $*$ is a cut vertex there will be multiple edges that serve to begin the circuit.  Then $\gamma$ is labeled by a word $w \in (A \cup A^{-1})^*$ and $\Delta$ is called a van Kampen diagram for $w$.  Note that $w =_G 1$, since $w$ also labels $\pi_\Delta(\gamma)$ in $X$, which is also an edge circuit.

Given a van Kampen diagram $\Delta$ with basepoint $*$, the \emph{intrinsic diameter} of $\Delta$ is $$\IDiam(\Delta) = \text{max}\{d_{\Delta}(*,x) | x \in \Delta^{(0)}\}.$$

Given $w \in (A \cup A^{-1})^*$ with $w =_G 1$, the \emph{intrinsic diameter} of $w$ is $$\IDiam(w) = \text{min} \{\IDiam(\Delta) | \Delta \text{ is a van Kampen diagram for } w \text{ with basepoint } *\}.$$

\begin{Def}
The \emph{intrinsic diameter function} of $\langle A | R \rangle$ is the function $\IDiam: \N \to \N$ given by $$\IDiam(n) = \text{max} \{\IDiam(w) | w \in (A \cup A^{-1})^* \text{ with } \ell(w) \leq n \text{ and } w =_G 1 \}.$$
\end{Def}

Note that in \cite[Chapter II]{FFs}, intrinsic diameter is defined using distance between arbitrary vertices, rather than fixing one of them as the basepoint, and the function defined above is referred to as the \emph{based} intrinsic diameter.  However, these two definitions give equivalent invariants, and the above definition is more convenient for the purposes of this work.

\begin{Def} \label{equivalent definition} Let $S,T \subseteq \N[\frac{1}{4}]$ and let $f:S \to [0,\infty)$ and $g:T \to [0,\infty)$ be functions.  Write $f \preceq g$ if there exist constants $A, B, C, D, E \ge 0$ such that for all $s \in S$, $f(s) \leq Ag(Bt+C) + Dt + E$, where $t = \text{max}([0,s] \cap T)$.  If $f \preceq g$ and $g \preceq f$, write $f \simeq g$ and say that $f$ and $g$ are \emph{equivalent}.  If $P$ is a property of functions and $g$ has $P$, say that $f$ \emph{is at most $P$} if $f \preceq g$.  For example, $f$ is at most linear if $f \preceq g$ and $g$ is linear.
\end{Def}

Note that this notion of equivalence can distinguish between, for example, linear, quadratic, polynomial, and exponential growth, among others.  Using this equivalence relation, the equivalence class of a filling function of a finite presentation is a quasi-isometry invariant; see \cite[Chapter II]{FFs} for details.  Although all the filling functions listed in \cite[Chapter II]{FFs} have a domain of $\N$, the above definition includes functions with domains in $\N[\frac{1}{4}]$ specifically in order to compare tame filling functions with diameter functions.

The following definition was introduced by Mihalik and Tschantz in \cite{1-combings}:

Given a 2-complex $C$ with basepoint $*$ and a subcomplex $D \subseteq C^{(1)}$, a \emph{1-combing} of the pair $(C,D)$ based at $*$ is a continuous function $\Psi: D \times I \to C$ such that 
\begin{itemize}
\item for all $t \in I$, $\Psi(*,t) = *$,
\item for all $x \in D$, $\Psi(x,0) = 0$ and $\Psi(x,1) = x$, and
\item for all $p \in D^{(0)}$ and all $t \in I$, $\Psi(p,t) \in C^{(1)}$.
\end{itemize}

Let $f: \N[\frac{1}{4}] \to \N[\frac{1}{4}]$ be a function.  A 1-combing $\Psi$ of the pair $(C,D)$ is \emph{$f$-tame} if for all $n \in \N[\frac{1}{4}]$ and for all $x \in D$ and $s,t \in [0,1]$ such that $s \leq t$, if $d_{C}(*,\Psi(x,t)) \leq n$, then $d_{C}(*,\Psi(x,s)) \leq f(n)$.

\begin{Def} Let $f: \N[\frac{1}{4}] \to \N[\frac{1}{4}]$ be non-decreasing.  $f$ is an \emph{intrinsic tame filling function} for $\langle A | R \rangle$ if, for all $w \in (A \cup A^{-1})^*$ with $w =_G 1$, there is a van Kampen diagram $\Delta_w$ for $w$ with basepoint $*$ and a 1-combing $\Psi_w$ of $(\Delta_w, \partial \Delta_w)$ based at $*$ such that $\Psi_w$ is $f$-tame.
\end{Def}

\section{Icicles}
Let $\Delta$ be a van Kampen diagram and $T$ a spanning tree of $\Delta$.  Given a 1-cell $e \in E(\Delta \setminus T)$ with a corresponding directed edge $\overrightarrow{e}$ directed from a vertex $x$ to a vertex $y$, let $\gamma_x$ and $\gamma_y$ be the unique simple paths in $T$ from $*$ to $x$ and $y$, respectively.  Let $\alpha$ be the longest initial segment on which $\gamma_x$ and $\gamma_y$ agree, and define $\beta_x$ and $\beta_y$ by $\gamma_x = \alpha \cdot \beta_x$ and $\gamma_y = \alpha \cdot \beta_y$.  Then $\eta = \beta_x \cdot \overrightarrow{e} \cdot \overline{\beta_y}$ is a simple circuit.  Since $\Delta$ is planar, by the Jordan Curve Theorem, we know that $\eta$ splits the plane into two components, the inside and outside of $\eta$.  See Figure~\ref{fig:Icicle Definitions}.

\begin{figure}[h!]
    \includegraphics[width=.8\linewidth]{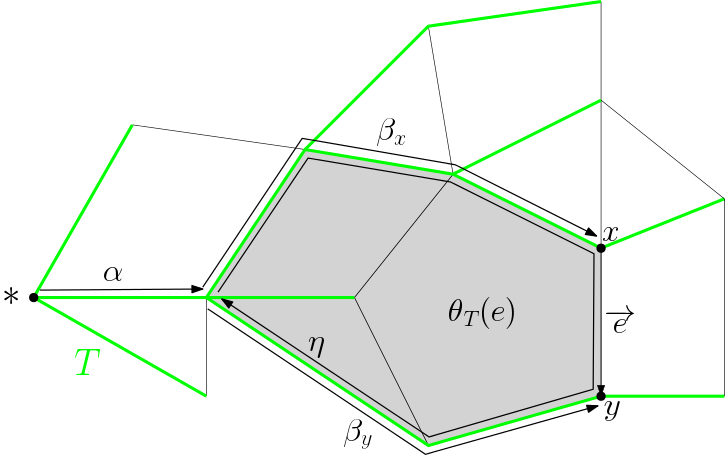}
    \caption{The $T$-icicle at $e$.  Edges in the tree $T$ are thickened.  The body of the icicle is shaded and $\alpha$ is the tail.}
    \label{fig:Icicle Definitions}
\end{figure}

\begin{Def} Given the above notation, the \emph{$T$-icicle at $e$} is the union of $\alpha$, $\eta$, and the inside of $\eta$.  It will just be called the icicle at $e$ and be denoted $I_e$ if the tree being used is clear.  Define the \emph{tail} of the icicle to be $\alpha$ and the \emph{body} of the icicle to be the union of $\eta$ and the inside of $\eta$.
\end{Def}

\begin{Def} Let $\theta_T(e)$ be the 2-cell in the body of $I_e$ with $e \subset \partial \theta_T(e)$.  Then $\theta_T$ is a function from $E(\Delta \setminus T)$ to the 2-cells of $\Delta$, which we will call the \emph{icicle flow function} for $T$.
\end{Def}

Icicles are the building blocks we will use to construct van Kampen diagrams with tame 1-combings, and are also the basis for constructing said 1-combings.  Icicles are particularly useful tools for these purposes in part because of the way they intersect each other: for every pair of icicles, either one is entirely contained in the other or they do not intersect, except possibly at their boundary.

\begin{Lemma} \label{Icicle relationships} Let $\Delta$ be a van Kampen diagram and $T$ a spanning tree of $\Delta$.  Let $e,e' \in E(\Delta \setminus T)$.  If $e' \subseteq I_e$, then $I_{e'} \subseteq I_e$.  If $e' \not \subseteq I_e$ and $e \not \subseteq I_{e'}$, then $\interior{I_e} \cap \interior{I_{e'}} = \emptyset$.
\end{Lemma}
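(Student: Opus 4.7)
The plan is to exploit the planarity of $\Delta$ (via the Jordan Curve Theorem) together with the uniqueness of paths in the tree $T$.  Write $B$ and $B'$ for the bodies of $I_e$ and $I_{e'}$, and assume throughout that $e \ne e'$ (if $e = e'$ both conclusions are immediate).  Let $\pi = \beta_x \cup \beta_y$ and $\pi' = \beta_{x'} \cup \beta_{y'}$ denote the tree portions of $\eta$ and $\eta'$ respectively; each is a simple path in $T$.

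For the first claim, suppose $e' \subseteq I_e$.  Since $e' \notin T$ and $\alpha \subseteq T$, the edge $e'$ lies in $B$, and its open interior lies in the open region bounded by $\eta$; in particular $x', y' \in B$.  I would first establish the auxiliary fact that $T \cap B$ is a connected subtree of $T$ containing $\pi$.  For any vertex $v \in B$, the unique tree path from $v$ to the nearest vertex of $\pi$ must stay inside $B$: if it exited, the last vertex of that path in $B$ would have to lie on $\eta \cap T = \pi$ (since two distinct $1$-cells of the planar $2$-complex $\Delta$ meet only at vertices), contradicting minimality.  From this it follows that the common prefix $\alpha'$ of $\gamma_{x'}, \gamma_{y'}$ lies in $\alpha \cup \pi$, the divergence vertex $p'$ lies on $\pi \subseteq \eta$, and both $\beta_{x'}, \beta_{y'}$ are tree paths between vertices of $B$, hence contained in $T \cap B \subseteq B$.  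Therefore $\eta' \subseteq B$, and because $\R^2 \setminus B$ is a connected unbounded set disjoint from $\eta'$, it is contained in the unbounded component of $\R^2 \setminus \eta'$, forcing the bounded component to lie in $B$.  Combining, $I_{e'} = \alpha' \cup B' \subseteq I_e$.

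For the second claim, observe that $\eta \cap \eta' = \pi \cap \pi'$, because $e \ne e'$ are the only non-tree edges of the two cycles and distinct $1$-cells of $\Delta$ share no interior points.  By the Helly property for subtrees of a tree, this intersection is either empty or a connected subpath $\rho$ of $T$.  The complement $\eta' \setminus \rho$ is then disjoint from $\eta$; since it is connected and contains the interior of $e'$, which lies in $\R^2 \setminus B$ (because $e' \not\subseteq B$), the entire set $\eta' \setminus \rho$ lies in $\R^2 \setminus B$.  Thus $\eta' \subseteq (\R^2 \setminus B) \cup \eta$, so $\eta'$ is disjoint from $\interior{I_e}$.  The connected open set $\interior{I_e}$ therefore sits in exactly one component of $\R^2 \setminus \eta'$; if it lay in the bounded component we would have $B \subseteq B'$ and hence $e \subseteq I_{e'}$, contradicting the hypothesis.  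So $\interior{I_e}$ lies in the unbounded component of $\R^2 \setminus \eta'$, yielding $\interior{I_e} \cap \interior{I_{e'}} = \emptyset$.

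The main technical obstacle is being careful about how tree paths interact with the planar Jordan curves $\eta$ and $\eta'$.  One has to justify that $T \cap B$ is connected (so that tree paths between vertices of $B$ cannot escape $B$) and that $\eta \cap \eta'$ reduces cleanly to the subpath $\pi \cap \pi'$ in $T$ rather than to a scattered union of vertices.  Both reduce to the fact that intersections of subtrees are subtrees, combined with a local argument using the cellularity of $\Delta$ to control how $T$-edges meet $\eta$.
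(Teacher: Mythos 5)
Your argument for the second claim (disjointness) is correct and takes a genuinely different, arguably cleaner route than the paper. Where the paper shows separately that each of $\gamma_{x'}, \gamma_{y'}, \gamma_x, \gamma_y$ misses both open bodies and then derives a contradiction from the set $C = \interior{B_e}\cap\interior{B_{e'}}$ having empty boundary, you identify $\eta\cap\eta'$ with the subtree $\pi\cap\pi'$, use the fact that an intersection of two subtrees of a tree is connected, and then apply a single Jordan-curve dichotomy. That is a nice simplification.

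Your argument for the first claim (containment), however, has a genuine gap. You assert that from the connectedness of $T\cap B$ \emph{``it follows that the common prefix $\alpha'$ of $\gamma_{x'},\gamma_{y'}$ lies in $\alpha\cup\pi$''} and that \emph{``the divergence vertex $p'$ lies on $\pi$.''} Neither claim is true in general, and neither follows from connectedness of $T\cap B$. The paths $\gamma_{x'}$ and $\gamma_{y'}$ can travel together past $p$ deep into the interior of $B$ before they split, so $p'$ may well lie in $\interior{B}$, far from $\pi$, and $\alpha'$ may contain edges of $T$ strictly inside $B$. What your argument actually needs is the weaker fact that $p'\in B$ (so that $\beta_{x'},\beta_{y'}$ are tree paths between vertices of $B$) together with $\alpha'\subseteq I_e$ (so that the final line $I_{e'}=\alpha'\cup B'\subseteq I_e$ holds). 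Those do hold, but they require their own argument: when $\alpha$ is nontrivial you must show that $\gamma_{x'}$ and $\gamma_{y'}$ each enter $B$ through the single vertex $p$ where $\alpha$ meets $\eta$, and then stay in $B$ by the connectedness of $T\cap B$; hence both paths contain $\alpha$, so $p'$ lies at or beyond $p$ and therefore in $B$. This extra step about how tree geodesics from $*$ can cross into $B$ is essentially the content of the paper's own argument (it proves $\gamma_{x'},\gamma_{y'}\subseteq I_e$ directly, by contradiction on the unique tree path to a boundary vertex), and your write-up currently skips it by substituting the incorrect assertion that $p'\in\pi$.
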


\begin{proof}

Let $e$ have endpoints $x$ and $y$ and $e'$ have endpoints $x'$ and $y'$.  Let $\gamma_x$, $\gamma_y$, $\gamma_{x'}$, and $\gamma_{y'}$ be the unique simple paths in $T$ from $*$ to $x$, $y$, $x'$, and $y'$, respectively.  Suppose $e' \subseteq I_e$.  We will first show that $\gamma_{x'}$ and $\gamma{y'}$ are contained in $I_e$.  Suppose by way of contradiction that there is a point $p \in \gamma_{x'} \setminus I_e$.  Since $x' \in I_e$, there is also some vertex $q \in \gamma_{x'} \cap \partial I_e \cap \Delta^{(0)}$ such that $p$ is between $*$ and $q$ in $\gamma_{x'}$.  Since $\partial I_e \cap \Delta^{(0)} \subset \gamma_x \cup \gamma_y$, without loss of generality, let $q \in \gamma_x$.  Then the unique simple path in $T$ from $*$ to $q$ must be contained in $\gamma_{x}$. But since $p$ is between $*$ and $q$ in $\gamma_{x'}$, this path must also contain $p$.  This contradicts the fact that $p \not \in I_e$.  So $\gamma_{x'} \subset I_e$.  By the same argument, $\gamma_{y'} \subset I_e$.  So $\partial I_{e'} = \gamma_{x'} \cup \gamma_{y'} \cup e' \subset I_e$.  Since $I_e$ is simply connected, this implies that $I_{e'} \subseteq I_e$.

Now suppose $e' \not \subseteq I_e$ and $e \not \subseteq I_{e'}$.  Let $B_e$ and $B_{e'}$ be the bodies of $I_e$ and $I_{e'}$, respectively.  Note that $\interior{I_e} = \interior{B_e}$ and $\interior{I_{e'}} = \interior{B_{e'}}$.  I will first show that $\gamma_{x'}$ does not intersect $\interior{B_e}$.  For suppose there were some $p \in \gamma_{x'} \cap \interior{B_e}$.  Now since $e' \not \subseteq I_e$, $e' \cap \interior{B_e} = \emptyset$.  In particular, $x' \not \in \interior{B_e}$.  So there is some vertex $q \in \partial B_e \cap \Delta^{(0)}$ such that $p$ is between $*$ and $q$ in $\gamma_{x'}$.  Since $\partial B_e \cap \Delta^{(0)} \subset \gamma_x \cup \gamma_y$, without loss of generality let $q \in \gamma_x$.  Then the unique simple path in $T$ from $*$ to $q$ must be contained in $\gamma_{x}$. But since $p$ is between $*$ and $q$ in $\gamma_{x'}$, this path must also contain $p$.  This contradicts the fact that $p \in \interior{B_e}$, since $\interior{B_e}$ does not intersect $\gamma_x$.  So $\gamma_{x'} \cap \interior{B_e} = \emptyset$.  By the same argument, $\gamma_{y'} \cap \interior{B_e} = \gamma_{x} \cap \interior{B_{e'}} = \gamma_{y} \cap \interior{B_{e'}} = \emptyset$.  Then since $\partial B_e \subseteq e \cup \gamma_x \cup \gamma_y$, we have that $\partial B_e \cap \interior{B_{e'}} = \emptyset$.  In the same way, $\partial B_{e'} \cap \interior{B_e} = \emptyset$.

Now consider $C = \interior{B_e} \cap \interior{B_{e'}}$, and suppose by way of contradiction that $C \neq \emptyset$.  Note that in this case, $\bar{C} = B_e \cap B_{e'}$ and that $\partial C \subseteq \partial B_e \cup \partial B_{e'}$.  Suppose there is some $z \in \partial C \setminus \partial B_e$.  Then $z \in \partial B_{e'}$ and since $z \in \bar{C} \setminus \partial B_e$, we must have that $z \in \interior{B_e}$.  This contradicts that $\partial B_{e'} \cap \interior{B_e} = \emptyset$, so it must be that case that $\partial C \setminus \partial B_e = \emptyset$.  By the same argument, $\partial C \setminus \partial B_{e'} = \emptyset$.  Therefore, $\partial C = \emptyset$.  This implies that $\bar{C} = \interior{C}$, which makes $C = \emptyset$.

\end{proof}

\section{1-combings Respecting a Spanning Tree} \label{1-combings section}

In this section, we will construct a special type of 1-combing that will end up being particularly tame.  This is because the combing paths will respect a spanning tree, growing outwards from the basepoint of the diagram along the icicles of the tree and never crossing a branch of the tree.  If the spanning tree chosen is a tree of geodesics out of the basepoint, then such a 1-combing has combing paths that are in some way close to being geodesics themselves, resulting in a 1-combing that is about as tame as possible.  

Given a 2-complex $C$, a subcomplex $D \subseteq C^{(1)}$, a point $x \in C$, and a 1-combing $\Psi$ of the pair $(C,D)$, define 
$$P(\Psi,x) = \{ y \in C | \text{there is } p \in D \text{ and } 0 \leq s \leq t \leq 1 \text{ with } \Psi(p,t) = x \text{ and } \Psi(p,s) = y \},$$ 
the set of points prior to $x$ in $\Psi$.  This is the set of points $y$ such that there is some combing path containing $x$ where $y$ appears on that path before $x$.

\begin{Def} Given a van Kampen diagram $\Delta$ with basepoint $*$ and a spanning tree $T$ of $\Delta$, a \emph{1-combing respecting $T$} is a 1-combing $\Psi$ of the pair $(\Delta, \partial \Delta)$ based at $*$ satisfying two properties:
\begin{enumerate}
\item For all $e \in E(\partial \Delta \setminus T)$, $\bigcup_{x \in e} P(\Psi,x) = I_e$.
\item For $p \in \partial \Delta$ and $t \in [0,1]$, if $\Psi(p,t) \in T$, then $\Psi(p,[0,t])$ is the unique simple path in $T$ from $*$ to $\Psi(p,t)$.
\end{enumerate}
\end{Def}

The main purpose of this section is to show that such 1-combings exist:

\begin{Prop} \label{Nice 1-combings exist}
Let $\Delta$ be a van Kampen diagram and let $T$ be a spanning tree of $\Delta$.  Then there exists a 1-combing $\Psi_T$ that respects $T$.
\end{Prop}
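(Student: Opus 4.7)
I plan to construct $\Psi_T$ by first building, for every non-tree edge $e' \in E(\Delta \setminus T)$, an auxiliary continuous map $\psi_{e'} : e' \times I \to I_{e'}$ satisfying analogues of the two defining properties, by induction on the nesting partial order of icicles provided by Lemma~\ref{Icicle relationships}. Then $\Psi_T$ would be assembled edge-by-edge on $\partial\Delta$: on a tree edge $e \in E(\partial\Delta) \cap T$ with endpoints $u, w$ ($u$ nearer to $*$ in $T$), set $\Psi_T(p, \cdot)$, for $p \in e$, to traverse the $T$-path from $*$ to $u$ and then the sub-segment of $e$ from $u$ to $p$; on a non-tree boundary edge $e$, set $\Psi_T|_{e \times I} := \psi_e$. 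Continuity at a shared vertex $v_0$ holds because in both cases the combing path at $v_0$ ends with the unique $T$-path from $*$ to $v_0$, which is in fact what condition~(2) demands.

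For the base case of the induction, consider an innermost icicle $I_{e'}$, meaning $B_{e'}$ contains no non-tree edges besides $e'$. Lemma~\ref{Icicle relationships} together with an Euler-characteristic count forces $\theta_T$ to be a bijection between $E(\Delta \setminus T)$ and the 2-cells of $\Delta$, so the only 2-cell contained in $B_{e'}$ is $\theta_T(e')$. I would pick a homeomorphism $h : B_{e'} \to D^2$ that sends the endpoint $v$ of the tail $\alpha$ to a fixed boundary point of $D^2$ and sends the three arcs $\beta_{x'}$, $e'$, $\beta_{y'}$ of $\partial B_{e'}$ to three consecutive boundary arcs of $D^2$. Then $\psi_{e'}$ would first have every $p \in e'$ traverse $\alpha$ in lockstep to $v$, and subsequently sweep $B_{e'}$ via a continuous family of paths from $h(v)$ to $h(p)$ in $D^2$: for $p \in \beta_{x'} \cup \beta_{y'}$ the path is the boundary arc (pulling back to the corresponding tree path), and for $p$ in the interior of $e'$ the path cuts through the interior of $D^2$. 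In the inductive step, when $I_{e'}$ contains sub-icicles $I_{e''_1}, \ldots, I_{e''_k}$ with $\psi_{e''_i}$ already constructed, I would insert a detour parameterized by $\psi_{e''_i}$ through $B_{e''_i}$ at the moment the base-case sweep would cross the sub-icicle's top edge $e''_i$; surjectivity of $\psi_{e''_i}$ then guarantees that the detours for varying $p$ collectively cover all of $B_{e''_i}$.

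The main obstacle will be verifying condition~(2), which forbids combing paths from touching $T$ except along an initial $T$-path segment; concretely, the combing path for an interior $p \in e'$ must not cross any interior tree edge of $B_{e'}$ after leaving $\alpha$. I would address this by cutting $B_{e'}$ open along any interior tree subtrees (which are necessarily attached to $\beta_{x'} \cup \beta_{y'}$ since $T$ is spanning) to produce a polygonal disk $\tilde{B}_{e'}$, defining the sweep on $\tilde{B}_{e'}$, and projecting back; combing paths then only approach the interior tree tangentially. With these modifications, condition~(2) follows because the only $T$-portion of any combing path is its initial traversal of $\alpha$ (together with $\beta_{x'}$ or $\beta_{y'}$ when $p$ lies on those arcs), and condition~(1) $\bigcup_{x \in e} P(\Psi_T, x) = I_e$ follows inductively from the surjective coverage of $\theta_T(e)$ by the base-case sweep plus the inductive coverage of each sub-icicle body by its detour.
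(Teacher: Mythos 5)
Your high-level plan is sound and takes a genuinely different route from the paper: you recurse top-down on the icicle nesting poset (using Lemma~\ref{Icicle relationships} to organize the induction), whereas the paper proceeds bottom-up, iteratively peeling one 2-cell at a time from the outer boundary (defining $\Delta_0 \supsetneq \Delta_1 \supsetneq \cdots \supsetneq \Delta_n = T$ and a homotopy $\psi_i$ at each stage), then concatenating. Your observation that $\theta_T$ is a bijection, your recognition that interior tree edges obstruct condition~(2), and the cut-open-and-project fix are all correct and appropriate.

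The place where the argument is genuinely underspecified, and where the most natural reading breaks, is the inductive ``detour'' step. As phrased---``insert a detour parameterized by $\psi_{e''_i}$ through $B_{e''_i}$ at the moment the base-case sweep would cross $e''_i$''---this suggests the sweep is defined on all of $B_{e'}$ at once via a single homeomorphism to $D^2$, with the path for $p$ entering $B_{e''_i}$ at some \emph{intermediate} time, traversing it, and exiting. That construction would hit the branch vertex $v''_i$ of $I_{e''_i}$ (a point of $T$ interior to $B_{e'}$) at an intermediate time, and the prior segment would not be the $T$-geodesic to $v''_i$, violating condition~(2). What is actually needed---and what matches Lemma~\ref{paths improved}---is that the sweep in the inductive step be carried out only across the single 2-cell $\theta_T(e')$, from $\partial\theta_T(e') \setminus e'$ to $e'$; the sub-icicle path $\psi_{e''_i}(q(p),\cdot)$ is then \emph{prepended} as the initial segment whenever the entry point $q(p) \in e''_i$, not inserted mid-path as a detour. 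With that fix, each combing path visits a nested descending chain of icicle bodies exactly once, as in the paper.

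Two smaller points: first, ``combing paths then only approach the interior tree tangentially'' is not enough for condition~(2)---the condition forbids touching $T$ at all after the initial $T$-segment, not merely crossing it; the cut-open argument actually gives the stronger fact that interior sweep paths avoid the interior tree entirely, which is what you should claim. Second, gluing $\Psi_T$ together edge-by-edge on $\partial\Delta$ requires the parameterizations (not just the images) of combing paths to agree at shared vertices; since each $\psi_{e'}$ is built independently, a reparameterization step is needed before the pasting lemma applies. The paper sidesteps this by building all of $\Psi_T$ from a single global sequence of homotopies with a uniform time-slicing.
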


\begin{proof}

We will first demonstrate how to construct such a 1-combing, and then prove that it is a 1-combing that respects the given tree.  Figure~\ref{fig:Psi_T} shows an example of what the combing paths of our constructed 1-combings could look like.

\begin{figure}[h!]
    \includegraphics[width=.9\linewidth]{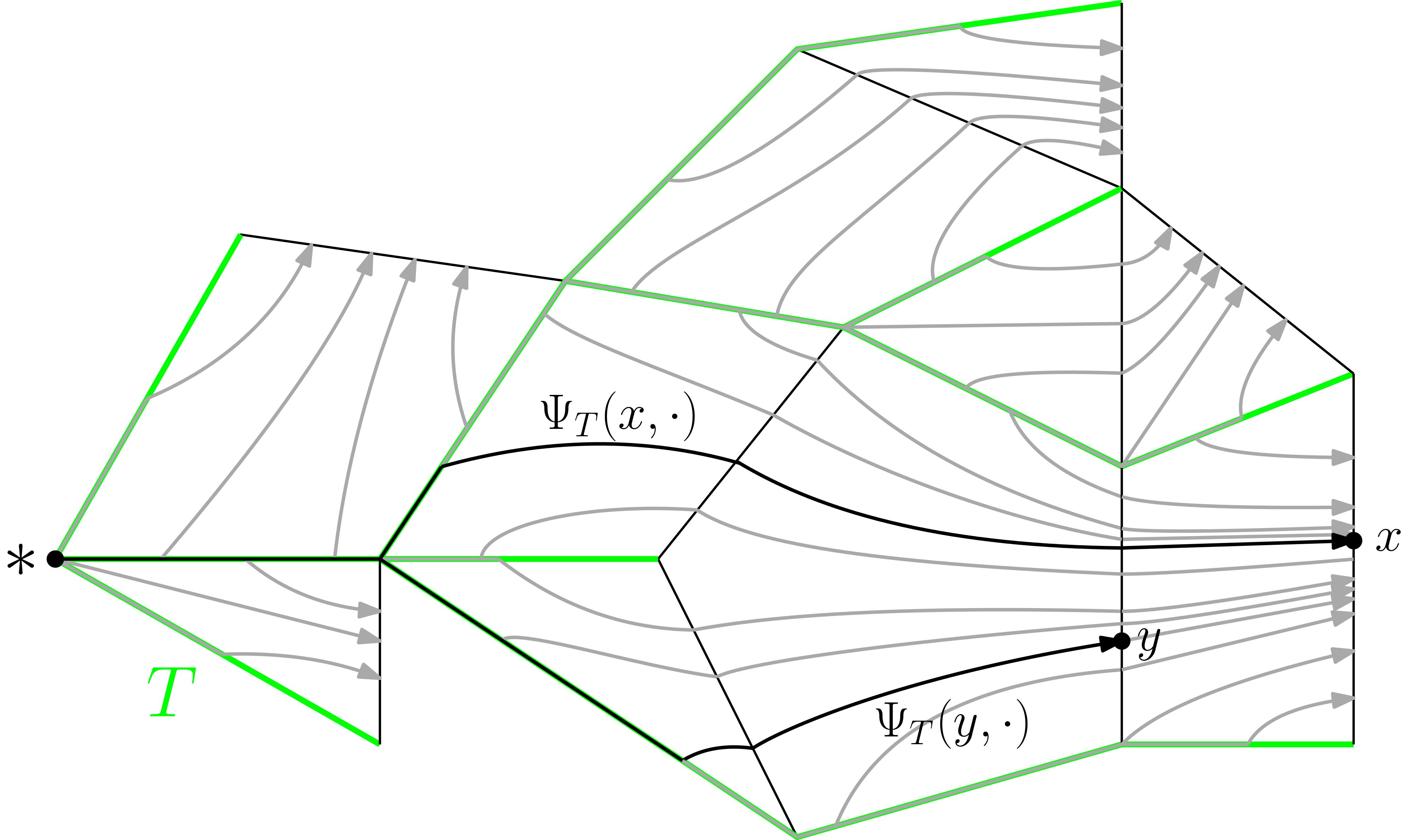}
    \caption{Representation of $\Psi_T$}
    \label{fig:Psi_T}
\end{figure}

We will define our combing paths in segments, one for each 2-cell, starting from the boundary of the diagram and working inwards towards the basepoint, following the icicles.  At each step, we identify a 1-cell $e \in E(\Delta \setminus T)$ that is on the boundary of the subcomplex on which combing paths have not yet been defined, as well as its associated 2-cell $\sigma = \theta_T(e)$, the one that is inside the icicle at $e$ with $e \subseteq \partial \sigma$.  Then we determine the segments of the combing paths that intersect $\interior{\sigma}$ with a homotopy $H$ from $e$ to $\partial \sigma \setminus \interior{e}$.  Note that the combing paths will run in the opposite direction to the paths $H(x,\cdot)$ for each $x \in e$.  Once all the segments have been defined, we stitch them together into a continuous 1-combing $\Psi_T$.

Let $\Delta$ be a van Kampen diagram and $T$ a spanning tree of $\Delta$.  Let $\Delta_0 = \Delta$, and define $\psi_{0} : \partial \Delta \times [0,1] \to \Delta$ by $\psi_{0}(x,t) = x$.  Now suppose we have defined a function $\psi_{i}: \partial \Delta \times [0,1] \to \Delta$ and a subcomplex $\Delta_i$ of $\Delta$ with the following four properties:
\begin{itemize}
\item $T \subseteq \Delta_i$.
\item $\R^2 \setminus \Delta_i$ is connected.
\item $\psi_i$ is continuous.
\item For all $x \in \partial \Delta$, $\psi_i(x,0) \in \Delta_i^{(1)}$.
\end{itemize}

Note that these properties all hold for $i = 0$.  Now if all 1-cells of $\partial \Delta_i$ are in $T$, the process ends.  Otherwise, we will define $\psi_{i+1}$ and $\Delta_{i+1}$ such that these same properties hold, as in Figure~\ref{fig:psi_{i+1} def}.  Let $e_{i+1}$ be a 1-cell of $\partial \Delta_i$ that is not in $T$, and let $I_{i+1}$ be the $T$-icicle at $e_{i+1}$.  Let $\sigma_{i+1} = \theta_T(e_{i+1})$, the 2-cell in $I_{i+1}$ whose boundary contains $e_{i+1}$.  Note that $\partial I_{i+1} \subseteq \Delta_i$, so since $\R^2 \setminus \Delta_i$ is connected, we must have that $I_{i+1} \subseteq \Delta_i$.  Therefore, $\sigma_{i+1} \subseteq \Delta_i$.  Let $\Delta_{i+1} = \Delta_i \setminus (\interior{\sigma_{i+1}} \cup \interior{e_{i+1}})$.  Since $\Delta_{i+1}$ is a subcomplex of $\Delta_i$ and we have not removed any vertices or any 1-cells in $T$ from $\Delta_i$, $\Delta_{i+1}$ also contains $T$.  Additionally, since $e_{i+1}$ is on the boundary of $\Delta_i$, $$\R^2 \setminus \Delta_{i+1} = (\R^2 \setminus \Delta_i) \cup \interior{\sigma_{i+1}} \cup \interior{e_{i+1}}$$ is connected.

We will define a continuous $H_{i+1} : e_{i+1} \times [0,1] \to \sigma_{i+1}$ with the following properties: 
\begin{enumerate}[i)]
\item For all $x \in e_{i+1}$, $H_{i+1}(x,0) = x$ and $H_{i+1}(x,1) \in \partial \sigma_{i+1} \setminus \interior{e_{i+1}}$.
\item If $x$ is an endpoint of $e_{i+1}$, then $H_{i+1}(x,t) = x$ for all $t \in [0,1]$.
\item For all $x \in \interior{e_{i+1}}$ and all $t \in (0,1)$, $H_{i+1}(x,t) \in \interior{\sigma_{i+1}}$.
\end{enumerate}

Since $\sigma_{i+1}$ is a 2-cell, we know that it is homeomorphic to the unit disk $$D^2 = \{(a,b) \in \R^2 | a^2 + b^2 \leq 1 \}.$$  Define $$S^1_+ = \{ (a,b) \in \R^2 | a^2 + b^2 = 1 \text{ and } b \ge 0 \} \text{ and } S^1_- = \{ (a,b) \in \R^2 | a^2 + b^2 = 1 \text{ and } b \leq 0 \}.$$  Then let $\phi_{\sigma_{i+1}}: \sigma_{i+1} \to D^2$ be a homeomorphism such that $\phi_{\sigma_{i+1}}(e_{i+1}) = S^1_+$.  Note that this implies that $\phi_{\sigma_{i+1}}(\partial \sigma_{i+1} \setminus \interior{e_{i+1}}) = S^1_-$.  Then define $h: S^1_+ \times [0,1] \to D^2$ by $$h((a,b),t) = (a,(1-2t)b).$$  Note that $h$ is a straight-line homotopy rel boundary from $S^1_+$ to $S^1_-$.  Finally, let $$H_{i+1} = \phi_{\sigma_{i+1}}^{-1} \circ h \circ (\phi_{\sigma_{i+1}} |_{e_{i+1}} \times \text{id}_{[0,1]}).$$
This definition gives the desired properties, for:
\begin{enumerate}[i)]
\item Let $x \in e_{i+1}$.  Then $$H_{i+1}(x,0) = \phi_{\sigma_{i+1}}^{-1}(h(\phi_{\sigma_{i+1}}(x),0)) = \phi_{\sigma_{i+1}}^{-1}(\phi_{\sigma_{i+1}}(x)) = x.$$  Also, since $\phi_{\sigma_{i+1}}(x) \in S^1_+$, this implies that $h(\phi_{\sigma_{i+1}}(x),1) \in S^1_-$, so $$H_{i+1}(x,1) = \phi_{\sigma_{i+1}}^{-1}(h(\phi_{\sigma_{i+1}}(x),1)) \in \phi_{\sigma_{i+1}}^{-1}(S^1_-) = \partial \sigma_{i+1} \setminus \interior{e_{i+1}}.$$
\item Let $x$ be an endpoint of $e_{i+1}$ and let $t \in [0,1]$.  Then $\phi_{\sigma_{i+1}}(x) \in \{(1,0),(-1,0)\}$.  Since the vertical component of $\phi_{\sigma_{i+1}}(x)$ is 0, $h(\phi_{\sigma_{i+1}}(x),t) = \phi_{\sigma_{i+1}}(x)$.  Thus, $$H_{i+1}(x,t) = \phi_{\sigma_{i+1}}^{-1}(h(\phi_{\sigma_{i+1}}(x),t)) = \phi_{\sigma_{i+1}}^{-1}(\phi_{\sigma_{i+1}}(x)) = x.$$
\item Let $x \in \interior{e_{i+1}}$ and $t \in (0,1)$.  Let $\phi_{\sigma_{i+1}}(x) = (a,b)$.  Since $x \in \interior{e_{i+1}}$, we know that $b > 0$.  Since $t \in (0,1)$, this implies that $|(1-2t)b| < |b|$.  Therefore, $a^2 + ((1-2t)b)^2 < 1$, which means that $h(\phi_{\sigma_{i+1}}(x),t) \in \interior D^2$.  Hence, $$H_{i+1}(x,t) = \phi_{\sigma_{i+1}}^{-1}(h(\phi_{\sigma_{i+1}}(x),t)) \in \interior{\sigma_{i+1}}.$$
\end{enumerate}

Now define $\psi_{i+1}: \partial \Delta \times [0,1] \to \Delta$ by 
$$\psi_{i+1}(x,t) = 
\begin{cases}
H_{i+1}(\psi_i(x,0),1-t), & \psi_i(x,0) \in e_{i+1}\\
\psi_i(x,0), & \psi_i(x,0) \in \Delta^{(1)} \setminus \interior{e_{i+1}}.
\end{cases}$$

\begin{figure}[t]
    \includegraphics[width=.9\linewidth]{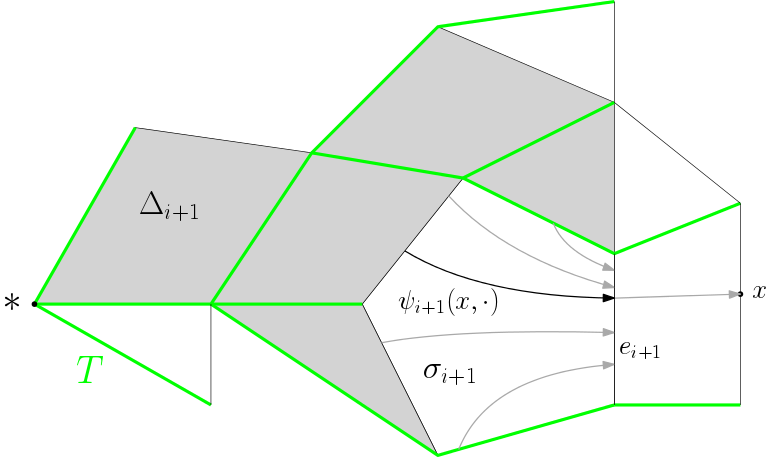}
    \caption{Defining $\Delta_{i+1}$ and $\psi_{i+1}$.  The entire diagram is $\Delta$.  $\Delta_{i+1}$ is the shaded region along with $T$, and $\Delta_i = \Delta_{i+1} \cup \sigma_{i+1}$.}
    \label{fig:psi_{i+1} def}
\end{figure}

Note that the two pieces of $\psi_{i+1}$ agree on their intersection because $H_{i+1}$ fixes the endpoints of $e_{i+1}$, so $\psi_{i+1}$ is well-defined and, by the pasting lemma, continuous.

Finally, let $x \in \partial \Delta$.  We know that $\psi_i(x,0) \in \Delta_i^{(1)}$, so if $\psi_i(x,0) \not \in e_{i+1}$, then $\psi_{i+1}(x,0) = \psi_i(x,0) \in \Delta_{i+1}^{(1)}$.  But if $\psi_i(x,0) \in e_{i+1}$, then $$\psi_{i+1}(x,0) = H_{i+1}\left(\psi_i(x,0),1\right) \in \partial \sigma_{i+1} \setminus \interior{e_{i+1}} \subseteq \Delta_{i+1}^{(1)}.$$  In either case, $\psi_{i+1}(x,0) \in \Delta_{i+1}^{(1)}$.  So $\Delta_{i+1}$ and $\psi_{i+1}$ satisfy the desired properties.

Now, since there are only finitely many 1-cells of $\Delta$ outside $T$, this construction must end with some $\Delta_n$ such that $\partial \Delta_n \subseteq T$.  I claim that, in fact, $\Delta_n = T$.  We know that $T \subseteq \Delta_n$.  Suppose by way of contradiction that there exists $x \in \Delta_n \setminus T$.  Then in particular $x \in \interior{\Delta_n}$, which means that $\R^2 \setminus \partial \Delta_n$ is disconnected, and therefore $\R^2 \setminus T$ is disconnected.  This is a contradiction, since a finite, planar tree cannot disconnect the plane, so $\Delta_n = T$.

Now let $H_{n+1}: T \times [0,1] \to T$ be the homotopy from $T$ to $*$ such that, for each $x \in T$, $H_{n+1}(x,\cdot)$ is the constant speed parametrization of the unique simple path from $x$ to $*$ in $T$.  Note that for all $x \in \partial \Delta$, $\psi_n(x,0) \in \Delta_n = T$, so we can define $\psi_{n+1}: \partial \Delta \times [0,1] \to \Delta$ by $\psi_{n+1}(x,t) = H_{n+1}(\psi_n(x,0),1-t)$.

Finally, define $\Psi_T: \partial \Delta \times [0,1] \to \Delta$ by $\Psi_T(x,t) = \psi_i(x,(n+1)t+i-n-1)$ for $t \in \left[1 - \frac{i}{n+1}, 1 - \frac{i-1}{n+1}\right]$.  Note that when $t = 1 - \frac{i}{n+1}$, we have $(n+1)t+i-n-1 = 0$ and when $t = 1 - \frac{i-1}{n+1}$, we have $(n+1)t+i-n-1 = 1$.  So to show that $\Psi_T$ is well-defined, we must show that for all $i \in [n]$ and $x \in \partial \Delta$, $\psi_{i+1}(x,1) = \psi_i(x,0)$.  This follows immediately from the definitions of $\psi_{i+1}$ and the fact that $H_{i+1}(y,0) = y$ for all $y \in e_{i+1}$.  Since each $\psi_i$ is continuous, $\Psi_T$ is continuous by the pasting lemma.

We will now establish that $\Psi_T$ is a 1-combing that respects $T$.  It will be convenient to first establish the second property, that for all $p \in \partial \Delta$ and $t \in [0,1]$, if $\Psi(p,t) \in T$, then $\Psi(p,[0,t])$ is the unique simple path in $T$ from $*$ to $\Psi(p,t)$.

Let $p \in \partial \Delta$ and $t \in [0,1]$, and suppose that $x = \Psi(p,t) \in T$.  There is some $i \in [n+1]$ such that $t \in \left[1 - \frac{i}{n+1}, 1 - \frac{i-1}{n+1}\right]$, so $x = \psi_i(p,s)$ where $s = (n+1)(t-1)+i$.  If $t \leq \frac{1}{n+1}$, we may choose $i = n+1$ so that $x = \psi_{n+1}(p,s)$.  Then $\Psi(p,[0,t]) = \psi_{n+1}(p,[0,s])$, which is by definition the unique simple path in $T$ from $*$ to $x$.  Otherwise, we may choose $i \leq n$ such that $t \neq 1 - \frac{i}{n+1}$, and therefore $s \neq 0$.  In that case, if $s = 1$, then $x = \psi_{i-1}(p,0)$ from the definition of $\psi_i$.  If $0 < s < 1$, we cannot have that $x = H_i(\psi_{i-1}(x,0),1-s)$, since $x \in T \subseteq \Delta^{(1)}$ and we know that $H_i(\psi_{i-1}(x,0),1-s) \not \in \Delta^{(1)}$ because $1-s \in (0,1)$.  So from the definition of $\psi_i$ we must have that $x = \psi_{i-1}(p,0) \in T$.  Therefore, $\psi_{i-1}(p,0) \not \in \interior{e_i}$, so $\psi_i(p,r) = \psi_{i-1}(p,0) = x$ for all $r \in [0,1]$.  Now for all $j \in [n]$ with $j > i$, suppose by induction that $\psi_{j-1}(p,0) = x$.  Then by the same logic, $\psi_{j-1}(p,0) \not \in \interior{e_j}$, so $\psi_j(p,r) = x$ for all $r \in [0,1]$.  As a result, $\Psi_T\left(p,\left[\frac{1}{n+1}, t\right]\right) = \{ x \}$.  So $\Psi_T(p,[0, t]) = \Psi_T\left(p,\left[0,\frac{1}{n+1}\right]\right)$, and we already know from the case with $t \leq \frac{1}{n+1}$ that $\Psi_T\left(p,\left[0,\frac{1}{n+1}\right]\right)$ is the unique simple path from $*$ to $x$.  This establishes the desired property.

Now we will show that $\Psi_T$ is in fact a 1-combing.  We already know that $\Psi_T$ is continuous.  For any $p \in \partial \Delta$, $\Psi_T(p,0) = \psi_{n+1}(p,0) = *$ and $\Psi_T(p,1) = \psi_1(p,1) = \psi_0(p,0) = p$.  Also, $p \in T$, so $\Psi(p,1) \in T$.  Then by the second property of a 1-combing respecting $T$, we know that $\Psi(p,[0,1]) \subseteq T \subseteq \Delta^{(1)}$.  In particular, if $p = *$, the unique simple path from $*$ to $*$ in $T$ is the constant path, so $\Psi(*,[0,1]) = \{ * \}$.  Hence, $\Psi_T$ is a 1-combing.

Finally, we will show that $\Psi_T$ satisfies the first property of a 1-combing respecting $T$, that for all $e \in E(\partial \Delta \setminus T)$, $\bigcup_{x \in e} P(\Psi,x) = I_e$.

First we will show that $I_e \supseteq \bigcup_{x \in e} P(\Psi_T,x)$.  Let $p \in \partial \Delta$ and $t \in [0,1]$ such that $\Psi_T(p,t) \in e$.  Suppose by way of contradiction that there is some $s \leq t$ so that $\Psi_T(p,s) \not \in I_e$.  Then there is some smallest $s' \in [s,t]$ such that $\Psi_T(p,s') \in I_e$.  Note that this implies that $\Psi_T(p,s')$ is on the boundary of the icicle.  There are two possible cases for where $\Psi_T(p,s')$ lands, as shown in Figure~\ref{fig:P in I}.

\begin{figure}[h!]
  \centering
  \begin{subfigure}[b]{.4\linewidth}
    \includegraphics[width=\linewidth]{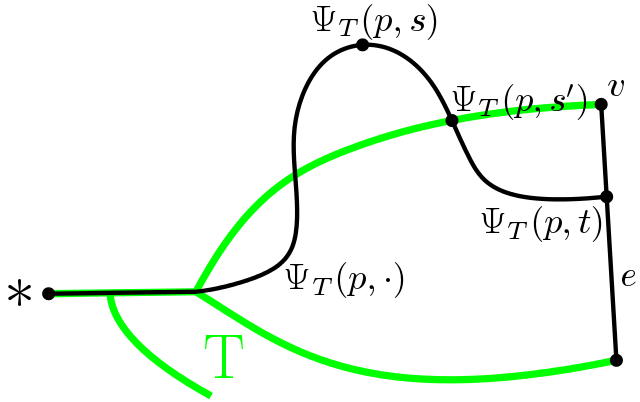}
    \caption{Case 1}
  \end{subfigure}
  \hspace{.05\linewidth}
  \begin{subfigure}[b]{.5\linewidth}
    \includegraphics[width=\linewidth]{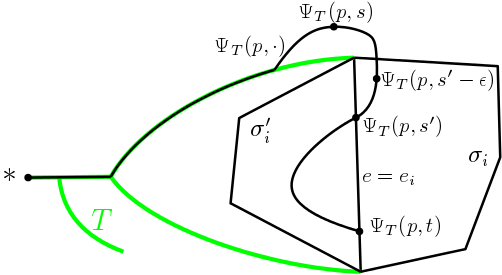}
    \caption{Case 2}
  \end{subfigure}
  \caption{If a prior point of $e$ in $\Psi_T$ were outside the icicle at $e$}
  \label{fig:P in I}
\end{figure}

Case 1: $\Psi_T(p,s') \in T$.  By the definition of $I_e$, $\Psi_T(p,s')$ is on the simple path in $T$ from $*$ to some endpoint $v$ of $e$.  Therefore, the simple path in $T$ from $*$ to $\Psi_T(p,s')$ is in the icicle at $e$.  So $\Psi_T(p,[0,s']) \subseteq I_e$.  So $\Psi_T(p,s) \in I_e$, which is a contradiction.  

Case 2: $\Psi_T(p,s') \not \in T$.  Since $\Psi_T(p,s')$ is on the boundary of the icicle at $e$, this implies that $\Psi_T(p,s') \in \interior e$.  Let $\epsilon = \frac{1}{2(n+1)}$ and let $e = e_i$ from the construction of $\Psi_T$.  Then since $\Psi_T(p,s') \in \Delta^{(1)}$, we must have that $\Psi_T(p,s') = \psi_j(p,0)$ for some $j$.  In fact, $j < i$, since we know that $\psi_j(p,0) \in \Delta_j$ but $\interior{e_i} \not \in \Delta_k$ for $k \ge i$.  Since for all $k$ with $j \leq k < i$, if $\psi_k(p,0) \in e_i \neq e_k$ then $\psi_{k+1}(p,0) = \psi_k(p,0)$, we know that $\psi_i(p,1) = \psi_{i-1}(p,0) = \psi_j(p,0) = \Psi_T(p,s')$.  Then $\Psi_T(p,s'-\epsilon) = \psi_i(p,\frac{1}{2}) \in \sigma_i$ by the definition of $H_i$.  Now since $s'$ was the smallest element of $[s,t]$ in the icicle at $e_i$, either $s \leq s'-\epsilon < s'$, in which case $\Psi_T(p,s'-\epsilon)$ is not in the icicle at $e_i$, or $s'-\epsilon < s < s'$, in which case $\Psi_T(p,s) = \psi_i(p,r)$ for some $r \in (\frac{1}{2},1)$, and thus $\Psi_T(p,s) \in \sigma_i$.  Either case implies that $\sigma_i$ is not in the icicle at $e_i$, which contradicts the definition of $\sigma_i$.  This final contradiction completes case 2, implying that no such $s$ exists.

\begin{figure}[h!]
    \includegraphics[width=.7\linewidth]{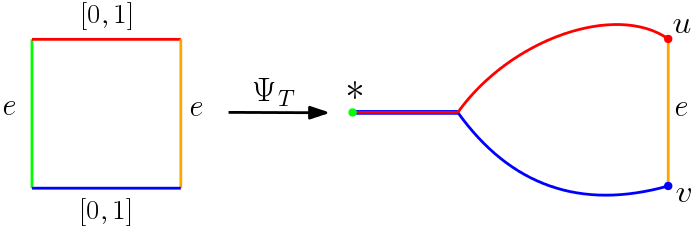}
    \caption{$\Psi_T$ mapping the boundary of $e \times [0,1]$ to the boundary of $I_e$ when $e \subseteq \partial \Delta$.}
    \label{fig:I in P}
\end{figure}

Now we will show the opposite containment.  Let $y \in I_e$.  

Case 1: $e \subseteq \partial \Delta$.  See Figure~\ref{fig:I in P}.  Let $u$ and $v$ be the endpoints of $e$.  Now $\Psi_T|_{e \times [0,1]}$ is a continuous function from the ball $e \times [0,1]$ into $\Delta$.  $\Psi_T(e \times \{0\}) = *$, $\Psi_T(\{u\} \times [0,1])$ is the simple path from $*$ to $u$ in $T$, $\Psi_T(e \times \{1\}) = e$, and $\Psi_T(\{v\} \times [0,1])$ is the simple path from $*$ to $v$ in $T$.  So the image of the path around the boundary of $e \times [0,1]$ is the path around the boundary of the icicle at $e$.  Therefore, since $\Psi_T|_{e \times [0,1]}$ is continuous, every point in the icicle at $e$ is in $\Psi_T(e \times [0,1])$.  Hence, there is some $x \in e$ and $s \in [0,1]$ such that $y = \Psi_T(x,s)$.  Of course, $x = \Psi_T(x,1)$, so $y \in P(\Psi_T,x)$.

Case 2: $e \not \subseteq \partial \Delta$.  First, note that $$\Delta = \bigcup_{f \in E(\partial \Delta \setminus T)} I_f,$$ so for each $x \in \Delta$, there exists a 1-cell $f \subseteq \partial \Delta$ with $x \in I_f$.  Then as a result of Case 1, there must be some $p \in \partial \Delta$ and $s \in [0,1]$ such that $y = \Psi_T(p,s)$.  Since $y \in I_e$, there is some $t \ge s$ such that $\Psi_T(p,t) \in \partial I_e$.  If $\Psi_T(p,t) \in e$, this shows that $y \in P(\Psi_T,x)$, where $x = \Psi_T(p,t) \in e$.  Otherwise, we must have that $\Psi_T(p,t) \in T$.  As a result, $\Psi_T(p,[0,t]) \subseteq T$, and in particular $y \in T$.  Since $y \in I_e$, $y$ must be on the unique simple path in $T$ from $*$ to an endpoint of $e$; call this endpoint $x$.  By the same reasoning as for $y$, there must be some $p' \in \partial \Delta$ and $t' \in [0,1]$ such that $x = \Psi_T(p',t')$.  Then since $x \in T$ and $y$ is on the simple path from $*$ to $x$ in $T$, there must be some $s' < t'$ such that $y = \Psi_T(p',s')$.  Hence, $y \in P(\Psi_T,x)$.
\end{proof}

Note that, in the construction of $\Psi_T$, although it was convenient to deal with the 1-cells of $\Delta \setminus T$ in a somewhat arbitrary order to construct a continuous $\Psi_T$, fewer choices are required to determine the images of individual combing paths.  Since for each $i \in [n]$, $\sigma_i \in \Delta_{i-1} \setminus \Delta_i$, we have that the $\sigma_i$'s are all distinct; in other words, the icicle flow function $\theta_T$ is injective.  Since $\Delta_n = T$ and does not contain any 2-cells, we have that $\theta_T$ is a bijection between the 1-cells of $\Delta \setminus T$ and the 2-cells of $\Delta$.  This bijection, along with a choice of homeomorphism $\phi_\sigma: \sigma \to D^2$ for each 2-cell $\sigma$, determines the images of the combing paths of $\Psi_T$.

\begin{figure}[h!]
    \includegraphics[width=.9\linewidth]{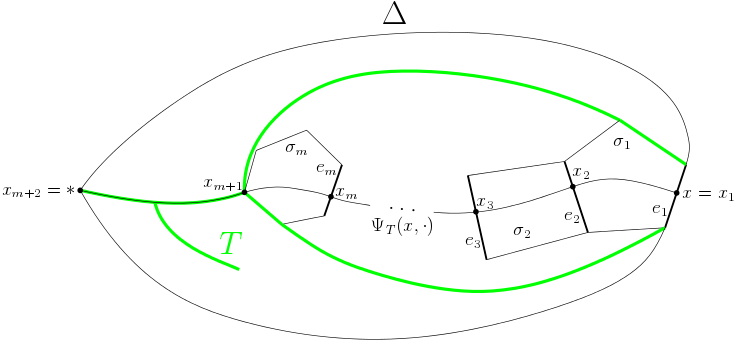}
    \caption{The decomposition of $\Psi_T(x,\cdot)$ given by Lemma~\ref{paths improved}.}
    \label{fig:what paths are}
\end{figure}

In particular, the following lemma will be useful when dealing with individual combing paths.  It decomposes a combing path into an initial segment in the tree $T$ and subsequent segments in each 2-cell that it crosses, as shown in Figure~\ref{fig:what paths are}.

\begin{Lemma} \label{paths improved}
Let $\Delta$ be a van Kampen diagram, $T$ a spanning tree for $\Delta$, and $\Psi_T$ a 1-combing constructed as in the proof of Proposition~\ref{Nice 1-combings exist}.  Given $x \in \partial \Delta$, there exists a sequence $1 \ge t_1 > \dots > t_{m+1} > t_{m+2} = 0$ with the following properties.  Let $x_j = \Psi_T(x,t_j)$ for all $j \in [m+2]$.  Then for each $j \in [m]$ there is a 1-cell $e_j \in E(\Delta \setminus T)$ with $x_j \in \interior{e_j}$.  Furthermore, $\Psi_T(x,[0, t_{m+1}])$ is the simple path from $*$ to $x_{m+1}$ in $T_{i+1}$, $\Psi_T(x,[t_1,1]) = \{x\}$, and for all $j \in [m]$, $t_j = \text{min}\{t \in [0,1] | \Psi_T(x,t) = x_j \}$, and $\Psi_T(x,[t_{j+1},t_j]) = \phi_{\sigma_j}^{-1}(h(\phi_{\sigma_j}(x_j),[0,1]))$, where $\sigma_j := \theta_T(e_j)$.
\end{Lemma}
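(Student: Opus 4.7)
The plan is to read the decomposition directly off the recursive construction of $\Psi_T$ in the proof of Proposition~\ref{Nice 1-combings exist}: I track, as the stage index $i$ increases (equivalently, as $t$ decreases), precisely when the endpoint $\psi_i(x,0)$ actually moves.

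First I would isolate the ``active'' stages. Let $i_1 < \dots < i_m$ be the indices $i \in [n]$ for which $\psi_{i-1}(x,0) \in \interior{e_i}$; by property (iii) of $H_i$, these are exactly the stages at which the combing path enters $\interior{\sigma_i}$. At all other stages $i \le n$ the path is stationary, either because $\psi_{i-1}(x,0) \notin e_i$ or because it is an endpoint of $e_i$ (which property (ii) of $H_i$ then fixes). Setting $e_j := e_{i_j}$, $\sigma_j := \theta_T(e_j)$, $t_j := 1 - (i_j-1)/(n+1)$ for $j \in [m]$, $t_{m+1} := 1/(n+1)$, $t_{m+2} := 0$, and $x_j := \Psi_T(x,t_j)$, the reparametrization formula $\Psi_T(x,t) = \psi_i(x,(n+1)t+i-n-1)$ evaluated at the top of each stage-$i$ interval yields $x_j = \psi_{i_j-1}(x,0) \in \interior{e_j}$ for $j \in [m]$, and $x_{m+1} = \psi_n(x,0) = \psi_{i_m}(x,0) \in \Delta_n = T$.

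Next I would verify the three image equalities stage by stage. For $t \in [t_1,1]$ the stages $1,\dots,i_1-1$ are all inactive, so induction gives $\Psi_T(x,t) = x$. For $t \in [t_{j+1},t_j]$ with $j \in [m]$, the only nonconstant substage is $i_j$, during which $\psi_{i_j}(x,s) = \phi_{\sigma_j}^{-1}(h(\phi_{\sigma_j}(x_j), 1-s))$; the subsequent inactive substages hold the path at the homotopy's endpoint $x_{j+1} = \phi_{\sigma_j}^{-1}(h(\phi_{\sigma_j}(x_j),1))$, which already lies on the traced curve, so $\Psi_T(x,[t_{j+1},t_j]) = \phi_{\sigma_j}^{-1}(h(\phi_{\sigma_j}(x_j),[0,1]))$. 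For $t \in [0,t_{m+1}]$ we are in the tree stage $n+1$, where by construction $\psi_{n+1}(x,s) = H_{n+1}(x_{m+1}, 1-s)$ is the constant-speed parametrization of the simple path from $*$ to $x_{m+1}$ in $T$.

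The main obstacle is the minimality claim $t_j = \min\{t \in [0,1] : \Psi_T(x,t) = x_j\}$ for $j \in [m]$, which amounts to showing that the combing path never revisits $x_j$ at a smaller $t$. The decisive input is the nesting $\Delta_0 \supseteq \Delta_1 \supseteq \dots \supseteq \Delta_n = T$ together with the fact that $\interior{e_i}$ is removed from $\Delta_i$ at stage $i$; combined with the invariant $\psi_k(x,0) \in \Delta_k^{(1)}$, this forces every stationary value $x_{k+1}$ (for $k \ge j$) to avoid $\interior{e_{i_j}}$ and hence to differ from $x_j$. Within each later active stage $i_k$ (with $k > j$) the path is confined to $\interior{\sigma_{i_k}} \cup \{x_k, x_{k+1}\}$, and within stage $i_j$ the parameters $t < t_j$ correspond to $1-s \in (0,1]$, placing the path in $\interior{\sigma_{i_j}} \cup \{x_{j+1}\}$ by property (iii) of $H_{i_j}$. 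Finally the tree-stage image lies in $T$, which does not meet $\interior{e_j}$ since $e_j \notin T$. Together these observations yield the minimality claim and complete the proof.
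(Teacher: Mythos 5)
Your proposal is correct and follows essentially the same route as the paper's proof: both identify the ``active'' stages of the construction as exactly those $i$ where the intermediate value lies in $\interior{e_i}$ (the paper uses the points $y_i = \Psi_T(x, 1-\tfrac{i}{n+1}) = \psi_i(x,0)$ and an index shift, but this is only a bookkeeping difference), verify the image equalities stagewise using properties (ii) and (iii) of $H_i$ together with the compatibility $\psi_{i+1}(x,1) = \psi_i(x,0)$, and establish the minimality of each $t_j$ from the strict nesting $\Delta_0 \supseteq \dots \supseteq \Delta_n = T$ with $\interior{e_i}$ deleted at stage $i$, plus the fact that $\interior{\sigma_{i_k}}$ and $T$ are both disjoint from $\interior{e_j}$.
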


\begin{proof}
Let $x \in \partial \Delta$.  For each $i \in \{0, 1, \dots, n+1\}$, let $s_i = 1 - \frac{i}{n+1}$ and let $y_i = \Psi_T(x, s_i)$.  Also, for $i \in \{0, 1, \dots, n-1\}$, either $y_i \in \interior{e_{i+1}}$, in which case $\Psi_T(x,[s_{i+1},s_i]) = H_{i+1}(y_i,[0,1])$, or $y_i \not \in \interior{e_{i+1}}$, in which case $\Psi_T(x,[s_{i+1},s_i]) = \{y_i\} = \{y_{i+1}\}$.  Let $i_1 < i_2 < \dots < i_m$ be the collection of indices such that $y_{i_j} \in \interior{e_{i_j+1}}$, and let $i_{m+1} = n$.  Then for all $j \in [m+1]$, let $t_j = s_{i_j}$ and $x_j = y_{i_j}$ Then we know that $\Psi_T(x,[0,t_{m+1}]) =: \gamma_x$, the unique simple path from $*$ to $x_{m+1}$ in $T$.  Then for all $i \in \{0, \dots, i_1-1\}$, since $\Psi_T(x,[s_{i+1},s_i]) = \{y_i\} = \{y_{i+1}\}$, we have that $x = y_i = x_1$.  Hence, $\Psi_T(x, [t_1,1]) = \{x\}$.  Similarly, for all $j \in [m]$ and $i \in \{i_j+1, \dots, i_{j+1}-1\}$, $y_{i_j+1} = y_i = x_{j+1}$, so $\Psi_T(x, [t_{j+1},s_{i_j+1}]) = \{x_{j+1}\}$.  Now for $j \in [m]$, $$H_{i_j+1}(x_j,1) = \Psi_T(x,s_{i_j+1}) = y_{i_j+1} = x_{j+1},$$ and therefore 

\begin{align*}
\Psi_T(x,[t_{j+1},t_j]) &= \Psi_T(x,[t_{j+1},s_{i_j+1}]) \cup \Psi_T(x,[s_{i_j+1},t_j]) \\
&= \{x_{j+1}\} \cup H_{i_j+1}(x_j,[0,1]) \\
&= H_{i_j+1}(x_j,[0,1]) \\
&= \phi_{\sigma_{i_j+1}}^{-1}(h(\phi_{\sigma_{i_j+1}}(x_j),[0,1])).
\end{align*}

Note that by definition $\sigma_{i_j+1} = \theta_T(e_{i_j+1})$, and $e_{i_j+1}$ is the 1-cell containing $x_j$, so this matches the statement of the lemma.  

We have only left to show that $t_j = \text{min}\{t \in [0,1] | \Psi_T(x,t) = x_j \}$.  Note that the above implies that $$\Psi_T(x,[0,1]) = \gamma_x \cup \bigcup_{j \in [m]} H_{i_j+1}(x_j,[0,1]).$$  Note also that for each $j, k \in [m]$ with $k > j$, $x_j \neq x_k$, since $x_j \in \interior{e_{i_j+1}}$ and $\interior{e_{i_j+1}}$ is not in $\Delta_{i_j+1}$, whereas $x_k \in \interior{e_{i_k+1}}$, and $e_{i_k+1} \subseteq \Delta_{i_k} \subseteq \Delta_{i_j+1}$ since $i_k \ge i_j+1$.  Furthermore, for all $j, k \in [m]$, $x_j \not \in \Psi_T(x, (s_{i_k+1}, t_k)) = H_{i_k+1}(x_k,(0,1))$, since $H_{i_k+1}(x_k,(0,1)) \cap \Delta^{(1)} = \emptyset$.  Also, for all $j \in [m]$, $x_j \not \in \gamma_x$, since $\gamma_x \subseteq T$ but $x_j \in \interior{e_{i_j+1}}$, and $e_{i_j+1} \in E(\Delta \setminus T)$.  This shows that $x_j \not \in \Psi_T(x, [0,t_j))$.  In other words, $t_j = \text{min}\{t \in [0,1] | \Psi_T(x,t) = x_j\}$.
\end{proof}

\section{Constructing Simply, Geodesically Bounded Van Kampen Diagrams of Small Diameter}

Our main strategy for proving Theorem~\ref{main} will be to take a van Kampen diagram with a geodesic spanning tree and start replacing the bodies of icicles whose diameter is too large with diagrams that have smaller intrinsic diameter.  This will cause 1-combings respecting the tree to become more and more tame.  In order for this replacement to happen while preserving the necessary structure, the diagrams with which we replace the bodies of icicles must have two important properties.

\begin{Def} A 2-complex is \emph{simply bounded} if it is bounded by a simple circuit.
\end{Def}

Note that the bodies of icicles are always simply bounded by definition, so the replacement diagrams must also be simply bounded in order for them to be bodies of icicles in the same tree.

\begin{Def} Let $\Delta$ be a van Kampen diagram with basepoint $*$.  Then we say that $\Delta$ is \emph{geodesically bounded} if $d_\Delta(*,x) = d_{\partial \Delta}(*,x)$ for all $x \in \Delta^{(0)} \cap \partial \Delta$.  
\end{Def}

Note that if we consider the body of an icicle of a tree of geodesics to be a van Kampen diagram with basepoint at the intersection of the tail and body of the icicle, it is geodesically bounded.  The replacement diagrams must share this property in order to preserve distances in the resulting diagram.

The purpose of this section is to show that, given a van Kampen diagram that could be the body of an icicle---in other words, a diagram that is simply bounded---there is always another van Kampen diagram for the same word that is both simply bounded and geodesically bounded and has relatively small intrinsic diameter.  In order to say what "relatively small" means, we need a few more definitions.

Recall that $\mathcal{P} = \langle A | R \rangle$ is a finite presentation.  Given $w \in (A \cup A^{-1})^*$, let SB$_w$ be the set of all simply bounded van Kampen diagrams for $w$.  Define the simply bounded diameter of $w$ by
$$\IDiam_{\text{sb}}(w) = \text{inf}\{\IDiam (D) | D \in \text{SB}_w \}$$
and choose $D_w$ to be a van Kampen diagram for $w$ that attains this infimum if such a diagram exists.  Note that the infimum is attained if and only if $\text{SB}_w \neq \emptyset$.  Let
$$M_\mathcal{P} = \text{max}( \{0\} \cup \{ \IDiam_{\text{sb}}(w) | w \in (A \cup A^{-1})^* \text{ such that } \ell(w) \leq 4 \text{ and } \text{SB}_w \neq \emptyset \} ).$$

\begin{Prop} \label{fat diagrams} Let $\mathcal{P} = \langle A | R \rangle$ be a finite presentation for a group $G$ such that no generator is equal to the identity.  Let $w \in (A \cup A^{-1})^*$ with $w =_G 1$.  Suppose that there exists a simply bounded van Kampen diagram for $w$.  Then there exists a simply and geodesically bounded van Kampen diagram $\Delta$ for $w$ such that $$\IDiam(\Delta) \leq \text{max} \left(\IDiam(\ell(w)), \left\lfloor \frac{\ell(w)}{2} \right\rfloor + M_\mathcal{P} \right).$$
\end{Prop}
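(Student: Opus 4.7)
The plan is to proceed by induction on $n := \ell(w)$, using the quantity $B(w) := \max(\IDiam(n), \lfloor n/2 \rfloor + M_\mathcal{P})$ as the diameter bound. Because both $\IDiam$ and $\lfloor \cdot/2 \rfloor$ are non-decreasing, $B(\ell) \leq B(w)$ whenever $\ell \leq n$, so the bound propagates automatically across the inductive step. For the base case $n \leq 4$, I would begin with the diagram $D_w$ realising $\IDiam_{\text{sb}}(w) \leq M_\mathcal{P}$; since the boundary distances are at most $2$ and the hypothesis that no generator equals the identity forbids loop edges, any failure of geodesic boundedness can be corrected by a local surgery that substitutes a similarly short diagram (again of boundary length at most $4$, hence diameter at most $M_\mathcal{P}$) in the offending region.

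For the inductive step, start with any simply bounded van Kampen diagram $D$ for $w$. If $D$ is already geodesically bounded with diameter at most $B(w)$, we are done; otherwise, select a boundary vertex $v \neq *$ witnessing a failure, $d_D(*,v) < d_{\partial D}(*,v)$, together with a $D$-geodesic $\gamma$ from $*$ to $v$. After reducing $\gamma$ to a simple edge path and, if needed, subdividing at intermediate boundary vertices it visits, $\gamma$ separates $D$ into two simply bounded subdiagrams $D_1, D_2$ bounded by $\alpha \gamma^{-1}$ and $\gamma \beta^{-1}$ respectively, where $w = \alpha \beta^{-1}$. Both $D_1$ and $D_2$ have boundary length strictly less than $n$, so the inductive hypothesis yields simply and geodesically bounded replacements $D_1', D_2'$ with intrinsic diameters at most $B(\partial D_i) \leq B(w)$. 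Because each $D_i'$ is geodesically bounded, the arc $\gamma$ is a geodesic in $D_i'$; gluing $D_1'$ and $D_2'$ along $\gamma$ then yields a candidate diagram $\Delta$ for $w$ whose boundary is the original simple circuit $\partial D$ and whose vertices inherit the diameter bound via $d_\Delta(*,x) \leq d_{D_i'}(*,x)$ for $x \in D_i'$.

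The main obstacle is that $\Delta$ need not itself be geodesically bounded: for a boundary vertex $x$ on $\alpha$ near $v$, the distance $d_{D_1'}(*,x)$ equals the distance along $\partial D_1'$, which routes through the shortcut $\gamma$ and so may undershoot the required $d_{\partial \Delta}(*,x)$. In other words, each replacement, while tailored to $\partial D_i$, need not respect the full boundary $\partial \Delta$. I would address this by iterating the repair on the candidate $\Delta$ and arguing termination via a strictly decreasing complexity measure, such as the total deficit $\sum_x (d_{\partial \Delta}(*,x) - d_\Delta(*,x))$ summed over boundary vertices, or by choosing $v$ to maximise the current deficit so that the procedure strictly improves the worst case. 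Carefully ensuring that the replacements respect not just the local subdiagram boundaries but the full final boundary circuit is the technical heart of the argument.
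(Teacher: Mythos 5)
Your approach is structurally different from the paper's (recursive geodesic splitting versus iterative local padding), but it has a fundamental gap that I don't think can be repaired with a cleverer termination argument. You split $D$ along a $D$-geodesic $\gamma$ from $*$ to the offending vertex $v$, apply the inductive hypothesis to the two pieces, and glue the resulting diagrams $D_1'$ and $D_2'$ back together along $\gamma$. But $\gamma$ is then an internal edge path in the glued diagram $\Delta$, of length $|\gamma| = d_D(*,v) < d_{\partial \Delta}(*,v)$. Hence $d_\Delta(*,v) \le |\gamma| < d_{\partial\Delta}(*,v)$: the geodesic-boundedness failure at $v$---the very thing you chose $v$ to witness---is exactly reproduced in $\Delta$. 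The shortcut $\gamma$ is what you were trying to eliminate, and your operation keeps it. Consequently any deficit-style potential $\sum_x\bigl(d_{\partial\Delta}(*,x) - d_\Delta(*,x)\bigr)$ cannot strictly decrease at $v$, and iterating the same move on other vertices leaves this term stuck. (There is a second, smaller gap: the base case $n\le 4$ also needs geodesic boundedness, and ``local surgery'' is doing real work there that isn't spelled out; note the paper takes $n\le 2$ as its base case precisely because geodesic boundedness is then automatic.)

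The paper's fix is a genuinely different operation. Rather than cutting along a geodesic to $v$ and reassembling, it \emph{pads}: it glues a small simply bounded diagram $D_{v_iv_i^{-1}}$ (of boundary word $v_iv_i^{-1}$, $\ell(v_i)=2$) onto the length-two boundary arc through the offending vertex $x_i$. This pushes $x_i$ into the interior and places a new vertex $x_i'$ on the boundary in its stead, and one can show $d_{\Delta_{i+1}}(*,x_i') > d_{\Delta_i}(*,x_i)$ while $d_{\Delta_{i+1}}(*,q_i(y)) = d_{\Delta_i}(*,y)$ for every old boundary vertex $y$; so the cumulative boundary diameter $\bdiam$ strictly increases and is bounded above by $\tfrac{1}{2}\ell(w)^2$, giving termination. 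The diameter bound survives because the padding adds vertices only at distance at most $\lfloor \ell(w)/2\rfloor + M_\mathcal{P}$ from $*$. If you want to salvage an inductive flavor of your argument, the key missing idea is an operation that removes, rather than re-embeds, the short path to $v$; splitting along that path and regluing along it cannot do that.
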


\begin{proof}
Case 1: $\ell(w) \leq 2$.  Since there exists a simply bounded van Kampen diagram for $w$, $\text{SB}_w \neq \emptyset$.  So let $\Delta = D_w$.  Then $\IDiam(\Delta) \leq M_\mathcal{P} \leq \text{max} (\IDiam(\ell(w)), \left\lfloor \frac{\ell(w)}{2} \right\rfloor + M_\mathcal{P})$.  Also, if there exists a vertex $x \neq *$ on the boundary, then $d_{\Delta}(*,x) = 1 = d_{\partial \Delta}(*,x)$, so $\Delta$ is geodesically bounded.  Thus, $\Delta$ satisfies the desired conditions.

\begin{figure}[h!]
    \includegraphics[width=1.5in]{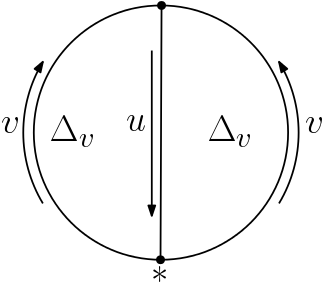}
    \caption{Showing $\text{SB}_{vv^{-1}} \neq \emptyset$ in Case 2}
    \label{fig:D_v def}
\end{figure}

Case 2: $\ell(w) > 2$.  I will first show that for any subword $v$ of $w$ with $\ell(v) = 2$, $\text{SB}_{vv^{-1}} \neq \emptyset$.  We will therefore be free to use the diagrams $D_{vv^{-1}}$ in order to construct $\Delta$ from the statement of the proposition.  Construct a diagram $\Delta_v$ by taking a simply bounded van Kampen diagram for $w$ and moving the basepoint so that $\Delta_v$ is a van Kampen diagram for $vu$, for some nonempty word $u$.  Then since $vu$ labels a simple circuit, $u$ labels a simple path on the boundary.  Therefore, taking two copies of $\Delta_v$ and gluing them along their respective paths labeled by $u$ results in a van Kampen diagram for $vv^{-1}$, shown in Figure~\ref{fig:D_v def}.  Note that the boundary of this van Kampen diagram is a simple circuit since $u$ is nonempty.  Therefore, $\text{SB}_{vv^{-1}}$ contains this diagram.

Let $\Delta_0$ be a van Kampen diagram for $w$ of minimum intrinsic diameter.  Let $* = p_0, p_1, \dots, p_n = *$ be the sequence of vertices traversed in order by the path labeled by $w$ along the boundary of $\Delta_0$.  Let $I_0 = \{i_1, \dots, i_m\} \subseteq [n-1]$ be the set of indices such that the vertices they index appeared earlier on the boundary; that is, $$I_0 = \{ i \in [n-1] | \text{there exists } k < i \text{ such that } p_i = p_k \}.$$  The vertices at these indices are cut vertices of $\Delta_0$, and therefore obstructions to making it simply bounded.  If $I_0 = \emptyset$, then $\Delta_0$ is simply bounded and $m = 0$.  Otherwise, we will construct a sequence of van Kampen diagrams for $w$ $\Delta_0, \Delta_1, \dots, \Delta_m$ such that $\Delta_m$ is simply bounded and $\IDiam(\Delta_m) \leq \text{max} \left(\IDiam(\ell(w)), \left\lfloor \frac{\ell(w)}{2} \right\rfloor + M_\mathcal{P}\right)$.  Our strategy will be to add a bit of ``padding" to the diagram at a cut vertex at each step, reducing the number of indices corresponding to a cut vertex by 1.

\begin{figure}[h!]
  \centering
  \begin{subfigure}[b]{.55\linewidth}
    \includegraphics[width=\linewidth]{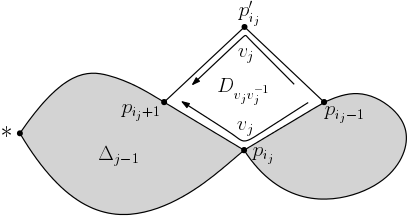}
    \caption{Defining $\Delta_j$ for $0 \leq j < m$}
  \end{subfigure}
  \hspace{.05\linewidth}
  \begin{subfigure}[b]{.35\linewidth}
    \includegraphics[width=\linewidth]{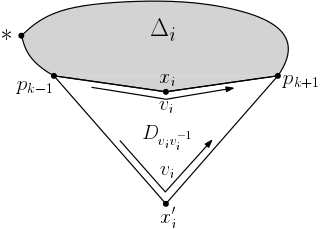}
    \caption{Defining $\Delta_{i+1}$ for $m \leq i < l$}
  \end{subfigure}
  \caption{Defining $\Delta_1, \dots \Delta_l$ in Case 2 of Proposition~\ref{fat diagrams}}
  \label{fig:Case 2 diagrams}
\end{figure}

For $j \in [m]$, assume by induction that we have constructed a van Kampen diagram $\Delta_{j-1}$ for $w$ such that $* = p_0, p_1, \dots, p_n = *$ is the sequence of vertices traversed in order by the path labeled by $w$ along the boundary of $\Delta_{j-1}$, that $p_0, p_1,\dots,p_{i_j-1}$ are all distinct, and that $\IDiam(\Delta_{j-1}) \leq \text{max} \left(\IDiam(\ell(w)), \left\lfloor \frac{\ell(w)}{2} \right\rfloor + M_\mathcal{P}\right)$.  Let $$I_{j-1} = \{ i \in [n-1] | \text{there exists } k < i \text{ such that } p_i = p_k \}$$ and assume by induction that $I_{j-1} = \{i_j, \dots, i_m\}$.  Then construct $\Delta_j$ from $\Delta_{j-1}$ in the following way.  Let $\gamma_j$ be the path along the boundary of $\Delta_{j-1}$ from $p_{i_j-1}$ to $p_{i_j+1}$ and let $v_j$ be the subword of $w$ that labels $\gamma_j$.  Note that $\ell(v_j) = 2$, so we have shown above that $\text{SB}_{v_jv_j^{-1}} \neq \emptyset$ and therefore $D_{v_jv_j^{-1}}$ is defined.  Let $\eta_j$ be a path in $D_{v_jv_j^{-1}}$ along the boundary starting at the basepoint labeled by $v_j$. Since $D_{v_jv_j^{-1}}$ is simply bounded, $\eta_j$ is a simple path, and I claim that $\gamma_j$ is as well.  Since no generator is equal to the identity, we know that $p_{i_j-1} \neq p_{i_j} \neq p_{i_j+1}$, so we only need to show that $p_{i_j-1} \neq p_{i_j+1}$.  Since $i_j \in I_{j-1}$, there is some $k < i_j$ such that $p_k = p_{i_j}$.  Using Lemma~\ref{gamma_j simple path}, proven below, this implies that $p_{i_j-1} \neq p_{i_j+1}$, making $\gamma_j$ a simple path.

Now construct $\Delta_j$ from $\Delta_{j-1}$ and $D_{v_jv_j^{-1}}$ by gluing $\eta_j$ along $\gamma_j$, as in Figure~\ref{fig:Case 2 diagrams}A, and let $q_j: \Delta_{j-1} \sqcup D_{v_jv_j^{-1}} \to \Delta_j$ be the corresponding quotient map.  Because $\gamma_j$ and $\eta_j$ are both simple paths along the boundaries of their respective diagrams, $\Delta_j$ is planar and simply connected.  Let $p_k' = q_j(p_k)$ for $k \neq i_j$, and let $p_{i_j}'$ be the vertex on the boundary of $q_j(D_{v_jv_j^{-1}})$ that is not in $q_j(\eta_j)$.  Then the path along the boundary of $\Delta_j$ from $p_{i_j-1}'$ to $p_{i_j+1}'$ through $p_{i_j}'$ is the image under $q_j$ of a path of length 2 along the boundary of $D_{v_jv_j^{-1}}$ starting at the basepoint, and is therefore labeled by $v_j$.  So $\Delta_j$ is a van Kampen diagram for $w$, and $* = p_0', p_1', \dots, p_n' = *$ is the sequence of vertices traversed in order by the path labeled by $w$ along the boundary of $\Delta_j$.  Additionally, $p_{i_j}' \neq p_k'$ for all $k < i_j$, so $p_0',p_1',\dots,p_{i_j}'$ are all distinct.  Furthermore, since $d_{\Delta_{j-1}}(*,p_{i_j-1}) \leq \left\lfloor \frac{\ell(w)}{2} \right\rfloor$ and $q_j$ identifies the basepoint of $D_{v_jv_j^{-1}}$ with $p_{i_j-1}$, we know that for any $y \in q_j(D_{v_jv_j^{-1}})^{(0)}$, 
$$d_{\Delta_j}(*,y) \leq d_{\Delta_{j-1}}(*,p_{i_j-1}) + \IDiam(D_{v_jv_j^{-1}}) \leq \left\lfloor \frac{\ell(w)}{2} \right\rfloor + M_\mathcal{P}.$$
Hence, 

\begin{align*}
\IDiam(\Delta_j) &\leq \text{max} \left(\IDiam(\Delta_{j-1}), \left\lfloor \frac{\ell(w)}{2} \right\rfloor + M_\mathcal{P}\right) \\
&\leq \text{max} \left(\IDiam(\ell(w)), \left\lfloor \frac{\ell(w)}{2} \right\rfloor + M_\mathcal{P}\right).
\end{align*}

Let $$I_j = \{ i \in [n-1] | \text{there exists } k < i \text{ such that } p_i' = p_k' \}.$$  Since $q_j|_{\Delta_{j-1}}$ is a bijection, for $0 \leq k < i < n$ with $i,k \neq i_j$, we have that $p_i' = p_k'$ if and only if $p_i = p_k$.  Also, $p_{i_j}'$ is only the image under $q_j$ of the one boundary vertex of $D_{v_jv_j^{-1}}$ that is not on $\eta_j$, and not the image of any $p_i$.  Therefore, $p_{i_j}' \neq p_i'$ for any $i \neq i_j$.  As a result, $I_j = I_{j-1} \setminus \{i_j\} = \{i_{j+1}, \dots, i_m\}$.  This completes the induction.

So $\Delta_{m}$ is a van Kampen diagram for $w$, and $I_m = \emptyset$.  Since $\ell(w)>2$, this implies that the boundary of $\Delta_{m}$ is a simple circuit.  Furthermore, $$\IDiam(\Delta_m) \leq \text{max} \left(\IDiam(\ell(w)), \left\lfloor \frac{\ell(w)}{2} \right\rfloor + M_\mathcal{P}\right).$$  However, $\Delta_m$ may not be geodesically bounded; we may need to continue with a similar process in order to acquire a geodesically bounded diagram with these properties.

Given any van Kampen diagram $\Delta$, define the \emph{cumulative boundary diameter} of $\Delta$ to be
$$\bdiam(\Delta) := \sum_{x \in \Delta^{(0)} \cap \partial \Delta} d_{\Delta}(*,x).$$
Note that the cumulative boundary diameter is bounded above with respect to the length of the boundary circuit of $\Delta$.  If $\Delta$ is a van Kampen diagram for a word $w$, then for each $x \in \Delta^{(0)} \cap \partial \Delta$ we have $d_\Delta(*,x) \leq d_{\partial \Delta}(*,x) \leq \frac{1}{2}\ell(w)$, and therefore $\bdiam(\Delta) \leq \frac{1}{2}\ell(w)^2$.

We will construct a sequence of van Kampen diagrams $\Delta_m, \Delta_{m+1}, \dots, \Delta_l$ for $w$ such that if $m \leq i \leq l$ then $\IDiam(\Delta_i) \leq \text{max} \left(\IDiam(\ell(w)), \left\lfloor \frac{\ell(w)}{2} \right\rfloor + M_\mathcal{P} \right)$, $\Delta_i$ is simply bounded, and, for $i > m$, $\bdiam(\Delta_i) > \bdiam(\Delta_{i-1})$.  The fact that the cumulative boundary diameter keeps increasing implies that at some point, each point on the boundary of the diagram will reach its maximum possible distance from the basepoint in a van Kampen diagram for $w$, resulting in a geodesically bounded diagram.  Assuming that these properties hold for $i \ge m$, we will construct $\Delta_{i+1}$ as follows.  Let $$X_i = \{x \in \Delta_i^{(0)} \cap \partial \Delta_i | d_{\Delta_i}(*,x) < d_{\partial \Delta_i}(*,x) \},$$ the set of vertices $x$ along the boundary of $\Delta_i$ such that there is no $\Delta_i$-geodesic from $*$ to $x$ in $\partial \Delta_i$.  If $X_i = \emptyset$, then $\Delta_i$ is geodesically bounded, so we will let $l := i$ and the construction of the sequence ends.  Otherwise, let $m_i = \text{min}(\{d_{\Delta_i}(*,x) | x \in X_i\})$ and let $x_i \in X_i$ be a vertex that attains this minimum.  Let $* = p_0, p_1, \dots, p_{n-1}, p_n = *$ be the sequence of vertices traversed in order by the path labeled by $w$ along the boundary of $\Delta_{i}$, and let $k \in [n]$ such that $p_k = x_i$.  Note that by the definition of $X_i$, $* \not \in X_i$, so $0 < k < n$.  Let $\gamma_i$ be the path along the boundary from $p_{k-1}$ to $x_i$ to $p_{k+1}$, and let $v_i$ be the subword of $w$ of length 2 that labels $\gamma_i$.  Let $\eta_i$ be a path in $D_{v_iv_i^{-1}}$ along the boundary starting at the basepoint labeled by $v_i$.  Note that $\gamma_i$ and $\eta_i$ and both simple paths, since they are paths of length 2 along the boundaries of simply bounded diagrams of a word $w$ with $\ell(w)>2$.

Construct $\Delta_{i+1}$ from $\Delta_i$ and $D_{v_iv_i^{-1}}$ by gluing $\eta_i$ along $\gamma_i$, as in Figure~\ref{fig:Case 2 diagrams}B, and let $q_i: \Delta_i \sqcup D_{v_iv_i^{-1}} \to \Delta_{i+1}$ be the corresponding quotient map.  Because $\gamma_i$ and $\eta_i$ are both simple paths along the boundaries of their respective diagrams, $\Delta_{i+1}$ is planar and simply connected.  Let $p_j' = q_i(p_j)$ for $j = 0, \dots, n$ and let $x_i'$ be the vertex on the boundary of $q_i(D_{v_iv_i^{-1}})$ that is not in $q_i(\eta_i)$.  Note that, by the exact same reasoning as in the construction of $\Delta_1$ through $\Delta_{m}$, $\Delta_{i+1}$ is a van Kampen diagram for $w$, $\Delta_{i+1}$ is simply bounded, and $\IDiam(\Delta_{i+1}) \leq \text{max} \left(\IDiam(\ell(w)), \left\lfloor \frac{\ell(w)}{2} \right\rfloor + M_\mathcal{P}\right)$.

We have only to show that $\bdiam(\Delta_{i+1}) > \bdiam(\Delta_i)$.  First, I claim that for any $y \in \Delta_i^{(0)}$, we have that $d_{\Delta_{i+1}}(*,q_i(y)) = d_{\Delta_i}(*,y)$; gluing $D_{v_iv_i^{-1}}$ to $\Delta_i$ has not changed the distance from $*$ to the image of $y$.  It is sufficient to show that $d_{\Delta_{i+1}}(*,q_i(y)) = d_{\Delta_i}(*,y)$ for $y \in \{x_i, p_{k-1}, p_{k+1}\}$.  Now since $d_{\Delta_i}(*,x_i) = m_i$, $d_{\Delta_i}(*,p_{k-1}) \in \{m_i-1,m_i,m_i+1\}$.  Suppose that $d_{\Delta_i}(*,p_{k-1}) = m_i-1$.  Then by the definition of $m_i$, $d_{\Delta_i}(*,p_{k-1}) = d_{\partial \Delta_i}(*,p_{k-1})$.  So there is a $\Delta_i$-geodesic along the boundary of $\Delta_i$ from $*$ to $p_{k-1}$.  Since $x_i$ is adjacent to $p_{k-1}$ via a 1-cell on the boundary of $\Delta_i$ and $d_{\Delta_i}(*,p_{k-1}) + 1 = d_{\Delta_i}(*,x_i)$, this geodesic extends to a geodesic from $*$ to $x_i$ along the boundary of $\Delta_i$. This contradicts the fact that $x_i \in X_i$.  Thus, $d_{\Delta_i}(*,p_{k-1}) \in \{m_i, m_i+1\}$.  The same argument shows that $d_{\Delta_i}(*,p_{k+1}) \in \{m_i, m_i+1\}$.  So the distances from $*$ to $x_i$, $p_{k-1}$, and $p_{k+1}$ in $\Delta_i$ all differ from each other by at most 1.  Now let $y \in \{x_i, p_{k-1}, p_{k+1}\}$ and let $\gamma$ be a $\Delta_{i+1}$-geodesic from $*$ to $q_i(y)$.  Let $z \in \{x_i, p_{k-1}, p_{k+1}\}$ such that $q_i(z)$ is the first point at which $\gamma$ enters $q_i(D_{v_iv_i^{-1}})$.  If $z = y$, $\gamma$ is contained in $q_i(\Delta_i)$, so $d_{\Delta_{i+1}}(*,q_i(y)) = d_{\Delta_{i}}(*,y)$.  Otherwise, $z \neq y$.  Then since $\gamma$ is a $\Delta_{i+1}$-geodesic which, up until $q_i(z)$, is contained in $q_i(\Delta_i)$, $d_{\Delta_{i+1}}(*,q_i(z)) = d_{\Delta_i}(*,z) \ge m_i$.  Since distance increases along geodesics, this implies that $d_{\Delta_{i+1}}(*,q_i(y)) \ge m_i+1 \ge d_{\Delta_{i}}(*,y)$.  So $d_{\Delta_{i+1}}(*,q_i(y)) = d_{\Delta_{i}}(*,y)$.

Since $\partial \Delta_{i+1} \setminus q_i(\partial \Delta_i) = \{x_i'\}$ and $q_i(\partial \Delta_i) \setminus \partial \Delta_{i+1} = \{q_i(x_i)\}$, and $d_{\Delta_{i+1}}(*,q_i(y)) = d_{\Delta_{i}}(*,y)$ for all $y \in \partial \Delta_i$, we have that $$\bdiam(\Delta_{i+1}) - \bdiam(\Delta_i) = d_{\Delta_{i+1}}(*,x_i') - d_{\Delta_i}(*,x_i).$$  Now any path from $*$ to $x_i'$ in $\Delta_{i+1}$ passes through one of $q_i(x_i)$, $q_i(p_{k-1})$, or $q_i(p_{k+1})$.  Hence, $d_{\Delta_{i+1}}(*,x_i') > m_i = d_{\Delta_i}(*,x_i)$.  Thus, $\bdiam(\Delta_{i+1}) - \bdiam(\Delta_i) > 0$.  Also, since $\Delta_i$ is a van Kampen diagram for $w$ for all $i$, we showed above that $\bdiam(\Delta_i) \leq \frac{1}{2}\ell(w)^2$.

Since $\bdiam(\Delta_i)$ is increasing and bounded above, the construction of this sequence must end with some $\Delta_l$.  Then $X_l = \emptyset$, which implies that, for all $x \in \Delta_l^{(0)} \cap \partial \Delta_l$, $d_{\Delta_l}(*,x) = d_{\partial \Delta_l}(*,x)$.  So $\Delta_l$ is geodesically bounded.

\end{proof}

We used the following technical lemma to show that $\gamma_j$ was a simple path when constructing $\Delta_{j+1}$ for $j+1 \in [m]$.  We will now tie up this loose end.

\begin{Lemma} \label{gamma_j simple path} Let $\Delta$ be a van Kampen diagram for a word $w$ in a finite presentation in which no generator is equal to the identity.  Let $p_1, p_2, \dots, p_n = p_1$ be the sequence of vertices traversed in order by the path labeled by $w$ along the boundary of $\Delta$.  Suppose there exist $i,j \in [n]$ with $i < j$ such that $p_i = p_j$ and $p_k \neq p_l$ whenever $i \leq k < l < j$.  Then $p_{j-1} \neq p_{j+1}$.
\end{Lemma}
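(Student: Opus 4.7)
The plan is to argue by contradiction. Suppose $p_{j-1} = p_{j+1}$, and write $v = p_j = p_i$ and $u = p_{j-1} = p_{j+1}$. Since no generator represents the identity, no 1-cell of $\Delta$ can be a loop, so $u \neq v$. Let $e_1$ and $e_2$ denote the 1-cells of $\Delta$ traversed by the edges $(p_{j-1}, p_j)$ and $(p_j, p_{j+1})$, respectively; both have endpoint set $\{u,v\}$.

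The central geometric idea is as follows. The subpath $\eta = p_i p_{i+1} \cdots p_j$ visits pairwise distinct vertices by hypothesis, so (setting aside a degenerate case discussed below) $\eta$ is a simple closed curve in $\R^2$. By the Jordan curve theorem it bounds a disk $D_\eta$, and since $\Delta$ is simply connected, $\R^2 \setminus \Delta$ has no bounded component, which forces $D_\eta \subseteq \Delta$. As the interior of $D_\eta$ is a 2-dimensional subset of $\Delta$, it is covered by 2-cells of $\Delta$, so there is a 2-cell of $\Delta$ adjacent to $e_1$ on its $D_\eta$-side.

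I then split on whether $e_1 = e_2$. If $e_1 = e_2$, then this 1-cell is traversed twice consecutively in the boundary circuit in opposite directions, which forces both of its sides to lie in $\R^2 \setminus \Delta$; this contradicts the 2-cell on the $D_\eta$-side just produced. If instead $e_1 \neq e_2$, then $\gamma := e_1 \cup e_2$ is itself a simple closed curve (a digon), and the same argument shows that $\gamma$ bounds a disk $D_\gamma \subseteq \Delta$ whose interior provides a 2-cell of $\Delta$ adjacent to $e_1$ on its $D_\gamma$-side. I would then check that $D_\gamma$ lies on the opposite side of $e_1$ from $D_\eta$: if $\operatorname{int}(e_2) \subseteq D_\eta$ then $e_2$ would be surrounded by $\Delta$ on both sides, contradicting $e_2 \subseteq \partial \Delta$, so $\operatorname{int}(e_2)$ must lie in the unbounded component of $\R^2 \setminus \eta$, which places $D_\gamma$ on the exterior side of $e_1$. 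Then $e_1$ has a 2-cell of $\Delta$ on each side, making it an interior 1-cell of $\Delta$, contradicting that $e_1$ is traversed by the boundary circuit.

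The main subtlety is the degenerate case $j = i+2$, in which the two edges of $\eta$ both connect $u$ and $v$ and might be the same 1-cell $e$; then $\eta$ is not embedded, and $D_\eta$ is not available. I would handle this by direct counting of boundary traversals. In the sub-case $e_1 = e_2$, the 1-cell $e$ (which equals $e_1$) would be traversed three times in the boundary circuit---twice within $\eta$ and once more as $(p_j, p_{j+1})$---exceeding the maximum of two traversals per 1-cell, so this sub-case does not occur. In the sub-case $e_1 \neq e_2$, the 1-cell $e = e_1$ is already traversed twice within $\eta$ alone, so both of its sides lie in $\R^2 \setminus \Delta$; but the disk $D_\gamma$ bounded by $e_1 \cup e_2$ still produces a 2-cell of $\Delta$ meeting $e_1$ on one of its sides, again a contradiction.
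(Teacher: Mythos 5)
Your overall strategy is the same as the paper's: use the Jordan curve theorem together with simple connectivity of $\Delta$ to produce disks $D_\eta$ and $D_\gamma$ lying inside $\Delta$, and then contradict the fact that $e_1$ lies on the boundary circuit by exhibiting $\Delta$ on both sides of $e_1$. The degenerate cases are also handled in essentially the way the paper does (counting appearances in the boundary circuit).

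However, there is a genuine gap in the main geometric step. You assert that because $\interior{e_2}$ lies in the unbounded component of $\R^2 \setminus \eta$, the disk $D_\gamma$ must lie on the opposite side of $e_1$ from $D_\eta$. That implication does not follow. The curves $\eta$ and $\gamma$ meet exactly along $e_1$, so the open arc $\eta \setminus e_1$, which has both endpoints on $\gamma$, either lies entirely in $\interior{D_\gamma}$ or entirely in the exterior of $\gamma$. In the first configuration the digon $\gamma$ \emph{encircles} $\eta$, so $D_\eta \subsetneq D_\gamma$; near a point of $\interior{e_1}$ the disk $D_\gamma$ then lies on the \emph{same} side as $D_\eta$, even though $\interior{e_2}$ is still exterior to $\eta$ (it lies on $\partial D_\gamma$, disjoint from $\interior{D_\eta}$). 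So your premise holds in that configuration while your conclusion fails, and no contradiction is reached. The paper closes exactly this hole: since every 1-cell of $\eta$ lies on $\partial\Delta$ and is therefore incident to $\R^2 \setminus \Delta$, the arc $\eta \setminus e_1$ cannot lie inside $\interior{D_\gamma} \subseteq \interior{\Delta}$, which rules out $D_\eta \subseteq D_\gamma$ and forces the two disks to lie on opposite sides of $e_1$.

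A second, smaller omission: your dichotomy ``$\interior{e_2} \subseteq D_\eta$ or $\interior{e_2}$ is in the unbounded component'' silently assumes $\interior{e_2}$ does not lie on $\eta$ itself. Since $e_2$ has endpoints $u$ and $v$, this is automatic when $j > i+2$, but when $j = i+2$ with the two edges of $\eta$ distinct, $e_2$ could a priori coincide with the \emph{first} 1-cell of $\eta$. That possibility has to be excluded, and it is: it would force the same directed edge (from $v$ to $u$) to appear at both position $i$ and position $j$ of the boundary circuit, which is impossible. The paper avoids this issue by phrasing the whole argument in terms of directed edges and tracking their occurrences in the boundary circuit, which is the cleaner bookkeeping here.
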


\begin{proof}

Note first the following general facts about the boundary circuit of a van Kampen diagram.  Recall that each 1-cell in a van Kampen diagram is associated to two directed edges, one going each direction.  The boundary of each 2-cell is a circuit which can be directed either clockwise or counterclockwise.  It is possible to choose a directed boundary circuit for each 2-cell (going either clockwise or counterclockwise) such that each directed edge in the diagram appears exactly once in either the directed boundary circuit of one of the 2-cells or the boundary circuit for the diagram, but not both; see, for example, \cite[p. 236]{L&S}.   So each directed edge corresponding to a 1-cell on the boundary of a van Kampen diagram appears at most once in the boundary circuit of the diagram.  If both directed edges appear in the boundary circuit, the corresponding 1-cell must be incident to the complement of the diagram on both sides, since otherwise one of the directed edges would appear in the directed boundary circuit of a 2-cell. If only one of the two directed edges appears in the boundary circuit, then the other appears in the directed boundary circuit of a 2-cell, so the corresponding 1-cell is incident to a 2-cell of the diagram on one side and incident to the complement of the diagram on the other.

If $e$ is a directed edge, let $\bar{e}$ denote the other edge corresponding to the same 1-cell, going in the opposite direction.

Let $i,j \in [n]$ with $i < j$ such that $p_i = p_j$ and $p_k \neq p_l$ whenever $i \leq k < l < j$.  Suppose by way of contradiction that $p_{j-1} = p_{j+1}$.  Let $e$ be the directed edge in the boundary circuit from $p_{j-1}$ to $p_{j}$, and let $f$ be the directed edge in the boundary circuit from $p_{j}$ to $p_{j+1}$.  Let $\gamma = ef$, the directed path in the boundary circuit from $p_{j-1}$ to $p_{j+1}$.  Since no generator is equal to the identity, we know that $p_j \neq p_{j-1} = p_{j+1}$.  So either $\gamma$ is a simple circuit or $e = \bar{f}$.  Let $\eta$ be the directed path in the boundary circuit from the $i$th vertex $p_i$ to the $j$th vertex $p_j$.  Note that $e$ is the last edge of $\eta$ and $\eta$ is a circuit that does not repeat any vertices, by the assumption that $p_k \neq p_l$ whenever $i \leq k < l < j$.  Therefore, either $\eta$ is a simple circuit or $\eta = \bar{e}e$.

Suppose first that $e = \bar{f}$.  In this case, both $e$ and $\bar{e}$ appear in $\gamma$, and therefore both appear in the boundary circuit, so $e$ is incident to $\R^2 \setminus \Delta$ on both sides.  If $\eta$ is a simple circuit and $e$ appears in $\eta$, $e$ is incident to the interior of $\eta$.  This would imply that the interior of $\eta$, a simple circuit in $\Delta$, contains a point in $\R^2 \setminus \Delta$, contradicting the fact that $\Delta$ is simply-connected.  If $\eta = \bar{e}e$, then $\bar{e}$ would appear twice in the boundary circuit: once on $\eta$ and once on $\gamma$.  This is a contradiction, so $e \neq \bar{f}$.

\begin{figure}[h!]
    \includegraphics[width=5in]{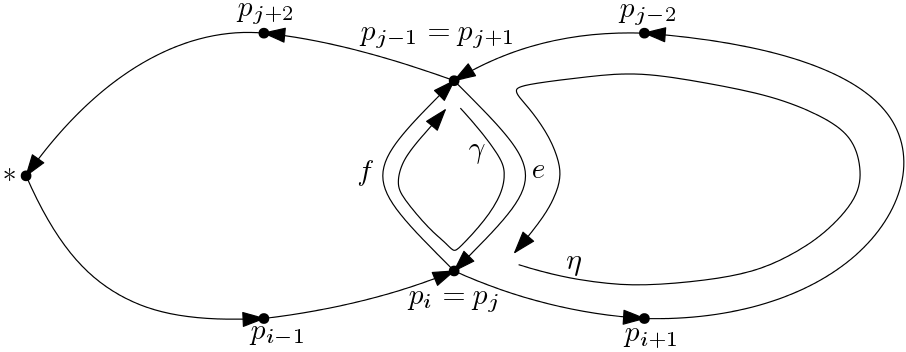}
    \caption{One possible drawing of $\Delta$ in the case where $\gamma$ and $\eta$ are both simple circuits.  The arrows indicate the direction of the boundary circuit as well as the orientations of $e$ and $f$.}
    \label{fig:If both circuits}
\end{figure}

So suppose instead that $\gamma$ is a simple circuit.  In this case, $e$ and $f$ are both incident to a 2-cell on one side, and thus neither $\bar{e}$ nor $\bar{f}$ appear in the boundary circuit.  Therefore, $\eta \neq \bar{e}e$.  If instead $\eta$ is a simple circuit, we are in the case shown in Figure~\ref{fig:If both circuits}.  Then $\interior{f}$ must either be in the interior or exterior of $\eta$; $f$ cannot be a part of $\eta$, since it appears only once in the boundary circuit in $\gamma$, and we know that $\bar{f}$ does not appear on the boundary circuit.  $f$ cannot be in the interior of $\eta$, since it is incident to the complement of $\Delta$, so $\interior{f}$ is in the exterior of $\eta$.  In this case, either $\eta \setminus e$ is in the interior of $\gamma$ or neither contains the other.  Now each edge of $\eta$ is incident to the complement of $\Delta$, so $\eta \setminus e$ cannot be in the interior of $\gamma$; then its interior would contain a point in the complement of $\Delta$.  So the interiors of $\gamma$ and $\eta$ do not intersect.  But since $e$ is on both $\gamma$ and $\eta$, it is incident to the interiors of both.  Since these interiors do not intersect, $e$ must be incident to the interior of $\gamma$ on one side and the interior of $\eta$ on the other.  This contradicts the fact that $e$ is incident to the complement of $\Delta$.  This final contradiction implies that $p_{j-1} \neq p_{j+1}$.

\end{proof}

\section{Main Theorem}

In this section, we will finish the proof of Theorem~\ref{main}.  In order to simplify the proof, we will first define a variant of intrinsic tame filling functions that only relies on distance to the 1-skeleton of a van Kampen diagram and prove that such functions are equivalent to tame filling functions.  This will allow us to ignore distance to 2-cells in the remainder of the proof.

\begin{Def} Let $f: \N[\frac{1}{4}] \to \N[\frac{1}{4}]$ be a function, $C$ a 2-complex with basepoint $*$ and $D$ a subcomplex of $C^{(1)}$.  A 1-combing $\Psi$ of the pair $(C,D)$ is \emph{graph $f$-tame} if for all $n \in \N[\frac{1}{4}]$ and for all $x \in D$ and $s,t \in [0,1]$ such that $s \leq t$ and $\Psi(x,s),\Psi(x,t) \in C^{(1)}$, if $d_{C}(*,\Psi(x,t)) \leq n$, then $d_{C}(*,\Psi(x,s)) \leq f(n)$.
\end{Def}

\begin{Def} Let $f: \N[\frac{1}{4}] \to \N[\frac{1}{4}]$ be non-decreasing.  $f$ is an \emph{intrinsic graph tame filling function} for $\langle A | R \rangle$ if, for all $w \in (A \cup A^{-1})^*$ with $w =_G 1$, there is a van Kampen diagram $\Delta_w$ for $w$ with basepoint $*$ and a 1-combing $\Psi_w$ of $(\Delta_w, \partial \Delta_w)$ based at $*$ such that $\Psi_w$ is graph $f$-tame.
\end{Def}

\begin{Lemma} \label{graph tame} Every intrinsic graph tame filling function for $\langle A | R \rangle$ is equivalent to an intrinsic tame filling function for $\langle A | R \rangle$.  Conversely, every intrinsic tame filling function is itself an intrinsic graph tame filling function.
\end{Lemma}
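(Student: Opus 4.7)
The lemma has two directions. The converse (that every intrinsic tame filling function is already an intrinsic graph tame filling function) is immediate: the graph $f$-tame condition is simply the restriction of the $f$-tame condition to pairs $s \leq t$ for which both $\Psi(x,s)$ and $\Psi(x,t)$ lie in $C^{(1)}$, so any $f$-tame $1$-combing is automatically graph $f$-tame, witnessed by the same van Kampen diagrams and $1$-combings.

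For the forward direction, my plan is to show that a fixed graph $f$-tame $1$-combing is automatically $g$-tame for $g(n) := f(n + K_0) + K_0$, where $K_0 = K_0(\mathcal{P})$ is a constant depending only on the presentation. The key geometric input is that within any single closed $2$-cell $\sigma$ of a van Kampen diagram $\Delta$, the coarse distance $d_\Delta(*, \cdot)$ has oscillation at most $K_0$. Indeed, any two vertices of $\partial\sigma$ are joined by a subpath of $\partial\sigma$ of length at most $\lfloor K/2 \rfloor$ where $K = \max_{r \in R} \ell(r)$, and the definition of $d_\Delta$ at edge- and face-interior points differs from vertex distances on the same cell by at most $1$; so $K_0 = K/2 + 1$ suffices.

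Given this, for $x \in \partial\Delta$ and $s \leq t \in [0,1]$ with $d_\Delta(*, \Psi(x,t)) \leq n$, I introduce the ``rounded'' times $\alpha = \sup\{s' \leq s : \Psi(x,s') \in \Delta^{(1)}\}$ and $\beta = \sup\{s' \leq t : \Psi(x,s') \in \Delta^{(1)}\}$, interpreting $\alpha = s$ or $\beta = t$ when the corresponding point is already in $\Delta^{(1)}$. Since $\Delta^{(1)}$ is closed in $\Delta$ and $\Psi$ is continuous, $\Psi(x,\alpha)$ and $\Psi(x,\beta)$ both lie in $\Delta^{(1)}$, and $\alpha \leq \beta$. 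When $\Psi(x,s) \notin \Delta^{(1)}$, the connected set $\Psi(x,(\alpha, s])$ is disjoint from $\Delta^{(1)}$ and therefore lies in a single open $2$-cell $\interior{\sigma}$, where $\sigma$ is the $2$-cell containing $\Psi(x,s)$; by continuity, $\Psi(x,\alpha) \in \partial\sigma$, so $\Psi(x,s)$ and $\Psi(x,\alpha)$ lie in the same closed $2$-cell $\sigma$. The analogous statement holds for $t$ and $\beta$.

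Applying the graph $f$-tame condition to the pair $\alpha \leq \beta$ gives $d_\Delta(*, \Psi(x,\alpha)) \leq f(d_\Delta(*, \Psi(x,\beta)))$; the oscillation bound on the $2$-cell containing $\Psi(x,t)$ yields $d_\Delta(*, \Psi(x,\beta)) \leq n + K_0$, so monotonicity of $f$ gives $d_\Delta(*, \Psi(x,\alpha)) \leq f(n + K_0)$, and a second application of the oscillation bound produces $d_\Delta(*, \Psi(x,s)) \leq f(n + K_0) + K_0 = g(n)$. Thus $\Psi$ is $g$-tame, $g$ is an intrinsic tame filling function for $\mathcal{P}$, and $g \simeq f$ follows from Definition~\ref{equivalent definition} with $A = B = 1$, $D = 0$, $C = E = K_0$ in one direction and $f(n) \leq g(n)$ trivially in the other. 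The main technicality is the connectedness argument identifying the $2$-cell visited by $\Psi$ near the times $s$ and $t$; everything else is bookkeeping with the coarse distance formulas.
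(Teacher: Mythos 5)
Your proof is correct and follows essentially the same route as the paper's: push $s$ and $t$ to nearby times $\alpha,\beta$ landing on the $1$-skeleton, apply graph $f$-tameness to that pair, and absorb the error with a bound on how much the coarse distance can vary across a single closed $2$-cell. The only cosmetic differences are that you shift $t$ backward to $\beta \leq t$ where the paper shifts forward to $t' \geq t$ (both land on the boundary of the $2$-cell containing $\Psi(x,t)$, so the estimate is the same), and your uniform constant $K_0 = \rho/2 + 1$ is slightly coarser than the paper's $3/4$ and $\rho/2 - 1/4$; neither change affects the argument or the resulting equivalence class.
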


\begin{proof}
We will first show that all intrinsic tame filling functions are intrinsic graph tame filling functions.  Let $h$ be an intrinsic tame filling function and let $w \in (A \cup A^{-1})^*$ with $w =_G 1$.  Let $\Delta_w$ be a van Kampen diagram for $w$ and $\Psi_w$ an $h$-tame 1-combing of $\Delta_w$.  Since $\Psi_w$ is $h$-tame, we have that for all $n \in \N[\frac{1}{4}]$, $p \in \partial \Delta_w$, and $s,t \in [0,1]$ such that $s \leq t$, if $d_{\Delta_w}(*,\Psi_w(p,t)) \leq n$, then $d_{\Delta_w}(*,\Psi_w(p,s)) \leq h(n)$.  Since this is true for all such $p$, $s$, and $t$, it is also true whenever $\Psi(p,s),\Psi(p,t) \in \Delta_w^{(1)}$.  Therefore, $\Psi_w$ is graph $h$-tame.  So $h$ is also an intrinsic graph tame filling function.

For the other direction, let $f$ be an intrinsic graph tame filling function.  Let $\rho = \text{max}\{\ell(r) | r \in R\}$.  Define $g: \N[\frac{1}{4}] \to \N[\frac{1}{4}]$ by $g(n) = f \left( n + \frac{3}{4} \right) + \frac{\rho}{2} - \frac{1}{4}$.  Note that $g$ is equivalent to $f$.  We will show that $g$ is an intrinsic tame filling function for $\langle A | R \rangle$.

Let $w \in (A \cup A^{-1})^*$ with $w =_G 1$.  Let $\Delta_w$ be a van Kampen diagram for $w$ and $\Psi_w$ a graph $f$-tame 1-combing of $\Delta_w$.  Let $n \in \N[\frac{1}{4}]$, $p \in \partial \Delta_w$, and $s,t \in [0,1]$ such that $s \leq t$ and $d_{\Delta_w}(*,\Psi_w(p,t)) \leq n$. We will show that $\Psi_w$ is $g$-tame by showing that $d_{\Delta_w}(*,\Psi_w(p,s)) \leq g(n)$.  See Figure~\ref{fig:Graph Tame}.

\begin{figure}[h!]
  \includegraphics[width=\linewidth]{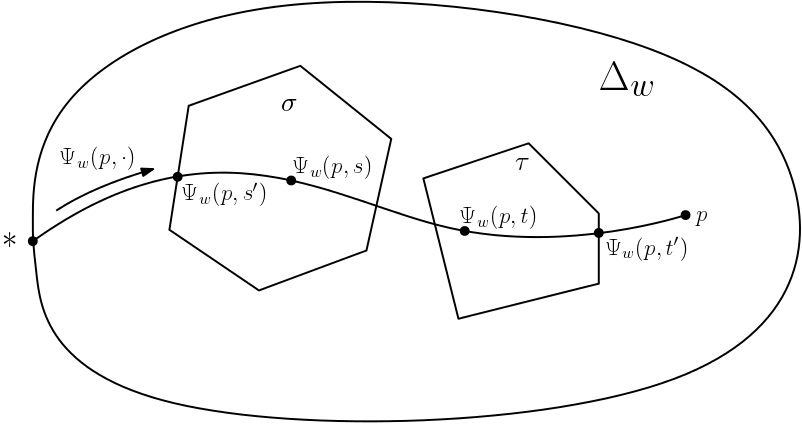}
  \caption{Showing $\Psi_w$ is $g$-tame.}
  \label{fig:Graph Tame}
\end{figure}

Choose $t' \in [0,1]$ such that $t' \ge t$ in the following way.  If $\Psi_w(p,t) \in \Delta_w^{(1)}$, let $t' = t$.  Otherwise, let $\tau$ be the 2-cell with $\Psi_w(p,t)$ in its interior.  Since $p \in \Delta_w^{(1)}$, and $\Psi_w(p, \cdot)$ is continuous, there is some $t' > t$ such that $\Psi_w(p,t') \in \partial \tau$.  Then by the definition of the coarse distance function, $d_{\Delta_w}(*,\Psi_w(p,t')) \leq d_{\Delta_w}(*,\Psi_w(p,t)) + \frac{3}{4} \leq n + \frac{3}{4}$.

Now choose $s' \in [0,1]$ such that $s' \leq s$ in the following way.  If $\Psi_w(p,s) \in \Delta_w^{(1)}$, let $s' = s$.  Otherwise, let $\sigma$ be the 2-cell with $\Psi_w(p,s)$ in its interior.  Since $* \in \Delta_w^{(1)}$, and $\Psi_w(p, \cdot)$ is continuous, there is some $s' < s$ such that $\Psi_w(p,s') \in \partial \sigma$.  Now there are at most $\rho$ 1-cells and $\rho$ vertices in $\partial \sigma$.  Suppose $q$ is a point in $\partial \sigma$ with maximum coarse distance from $*$.  Then $d_{\Delta_w}(*, q) \ge d_{\Delta_w}(*, \Psi_w(p,s)) + \frac{1}{4}$.  Now, there is a path in $\partial \sigma$ from $q$ to $\Psi_w(p,s')$ that passes through at most $\rho$ 1-cells not containing $q$ and $\rho$ vertices not equal to $q$.  At each step from a vertex to the interior of a 1-cell or the interior of a 1-cell to a vertex, the coarse distance changes by $\frac{1}{2}$.  Since there are at most $\rho$ such steps from $q$ to $\Psi_w(p,s')$, 
$$d_{\Delta_w}(*, \Psi_w(p,s')) \ge d_{\Delta_w}(*, q) - \frac{\rho}{2} \ge d_{\Delta_w}(*, \Psi_w(p,s)) + \frac{1}{4} - \frac{\rho}{2}.$$
But since $\Psi_w$ is graph $f$-tame, $s' \leq t'$, and $d_{\Delta_w}(*,\Psi_w(p,t')) \leq n + \frac{3}{4}$, we know that $d_{\Delta_w}(*, \Psi_w(p,s')) \leq f \left( n + \frac{3}{4} \right)$.  This and the above inequality imply that 
$$d_{\Delta_w}(*, \Psi_w(p,s)) \leq f \left( n + \frac{3}{4} \right) + \frac{\rho}{2} - \frac{1}{4} = g(n).$$

So $\Psi_w$ is $g$-tame for all such $w$.  Therefore, $g$ is an intrinsic tame filling function for $\langle A | R \rangle$.\\

\end{proof}

\begin{Th}\label{main} Given a finite presentation $\mathcal{P} = \langle A | R \rangle$ such that for all $a \in A$, $a$ is not equal to the identity, there is an intrinsic tame filling function for $\mathcal{P}$ that is equivalent to the intrinsic diameter function of $\mathcal{P}$.
\end{Th}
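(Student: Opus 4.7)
By Lemma~\ref{graph tame}, it suffices to exhibit an intrinsic graph tame filling function $g$ with $g \simeq \IDiam$ (the reverse inequality $\IDiam \preceq g$ is known from Brittenham--Hermiller). Given $w =_G 1$ of length $n$, the plan is to produce a van Kampen diagram $\Delta_w$ equipped with a spanning tree $T_w$ of $\Delta_w$-geodesics from $*$ such that every $T_w$-icicle body is simply and geodesically bounded, and then let $\Psi_w$ be the 1-combing respecting $T_w$ provided by Proposition~\ref{Nice 1-combings exist}.

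To build $\Delta_w$ and $T_w$, I would start from a minimum-diameter van Kampen diagram $\Delta_0$ for $w$ together with a spanning tree $T_0$ of $\Delta_0$-geodesics out of $*$. While some icicle body $B$ in the current diagram fails to be simply and geodesically bounded, apply Proposition~\ref{fat diagrams} to its boundary word to obtain a simply and geodesically bounded replacement $B'$, paste $B'$ in place of $B$, and extend the spanning tree by a tree of $B'$-geodesics from the basepoint of $B'$. Because $B'$ is geodesically bounded, the distances from $*$ to vertices of $\partial B$ are preserved, so the global tree remains a tree of $\Delta$-geodesics and the overall intrinsic diameter does not increase. Iterating this step---including applying it to fresh icicles created inside the replaced bodies---terminates provided an appropriate complexity measure is chosen (for instance, strong induction on the number of 2-cells not yet contained in a simply and geodesically bounded icicle body), and yields $\Delta_w$ and $T_w$ with $\IDiam(\Delta_w) \leq \IDiam(n)$.

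For the tameness bound, fix $x \in \partial \Delta_w$ and decompose $\Psi_w(x, \cdot)$ via Lemma~\ref{paths improved} into a tree prefix $\gamma_x \subseteq T_w$ followed by 2-cell crossings of $\sigma_m, \ldots, \sigma_1$, where $\sigma_j = \theta_{T_w}(e_j)$ for $e_j \in E(\Delta_w \setminus T_w)$. Suppose $d_{\Delta_w}(*, \Psi_w(x, t)) \leq k$. If $t$ lies in the tree prefix, prior distances are at most $k$ since $\gamma_x$ is a geodesic. Otherwise $\Psi_w(x, t) \in \sigma_j$, and Lemma~\ref{Icicle relationships}, combined with the observation that $e_{k+1} \subset \partial \sigma_k$, gives $I_{e_k} \subseteq I_{e_j}$ for all $k \geq j$; since $\gamma_x$ terminates in $I_{e_j}$, every prior point of $\Psi_w(x, t)$ lies in $I_{e_j}$. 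Letting $\rho = \max_{r \in R} \ell(r)$, the endpoints of $e_j$ lie on $\partial \sigma_j$, so they have distance at most $k + \rho$ from $*$; since $T_w$ is geodesic, both tree-branch sides of $\partial B_{e_j}$ have length at most $k + \rho$, giving $\ell(\partial B_{e_j}) \leq 2(k+\rho)+1$. By Proposition~\ref{fat diagrams}, $\IDiam(B_{e_j}) \leq \max(\IDiam(2(k+\rho)+1), (k+\rho) + M_\mathcal{P})$, and the icicle basepoint $*_j$ satisfies $d_{\Delta_w}(*, *_j) \leq k + \rho$. Hence every prior point has coarse distance at most $(k+\rho) + \max(\IDiam(2(k+\rho)+1), (k+\rho) + M_\mathcal{P}) + O(1)$, a bound equivalent to $\IDiam(k)$ under Definition~\ref{equivalent definition}.

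The main obstacle I expect is the iterative construction of $\Delta_w$ and $T_w$: establishing termination while simultaneously ensuring the spanning tree stays a tree of $\Delta$-geodesics, distances to replaced boundary vertices are preserved, and every icicle body at every depth of the nested replacements is simply and geodesically bounded. This calls for careful bookkeeping of how Proposition~\ref{fat diagrams} interacts with the icicle decomposition under successive substitutions.
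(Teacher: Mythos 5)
There is a genuine gap, and it is in the two places you yourself flag as the crux: the iteration condition and the diameter bound on icicle bodies. The loop you propose---``while some icicle body fails to be simply and geodesically bounded, apply Proposition~\ref{fat diagrams}''---never executes. Once $T_w$ is a tree of $\Delta_w$-geodesics, \emph{every} icicle body is already simply bounded (by definition its boundary $\eta=\beta_x\cdot\vec e\cdot\overline{\beta_y}$ is a simple circuit) and already geodesically bounded: each boundary vertex $v$ of the body lies on $\beta_x$ or $\beta_y$, the subpath from the icicle base $*_j$ to $v$ is a subpath of a $\Delta$-geodesic and hence a geodesic, and it lies in the boundary of the body. (The paper explicitly notes this.) So your construction hands back the starting $\Delta_0$, $T_0$ unchanged.

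This matters because of the second, fatal step: you then invoke Proposition~\ref{fat diagrams} as if it asserts that every simply and geodesically bounded icicle body $B_{e_j}$ satisfies $\IDiam(B_{e_j})\leq\max(\IDiam(\ell(\partial B_{e_j})),\,\lfloor\ell(\partial B_{e_j})/2\rfloor+M_{\mathcal P})$. It does not: it only asserts that some \emph{replacement} diagram with the same boundary word satisfies that bound. A geodesically bounded icicle body can still have enormous interior diameter---that is precisely the obstruction to tameness that needs fixing, and simply having a geodesic tree does not fix it. The correct move, as in the paper, is to iterate until the 1-combing itself is $f$-tame, using a strictly decreasing termination quantity $N_i$ (the largest coarse distance at which tameness fails), replacing only the maximal offending icicle bodies at each step, and carefully arranging the new tree and 1-combing to agree with the old one on the untouched portion so that no new failures at distance $\geq N_i$ are introduced. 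Your proposed measure (``number of 2-cells not yet in a geodesically bounded icicle body'') is both vacuously zero and, even if reformulated to track diameter, not obviously decreasing since each replacement can introduce many new 2-cells and new nested icicles. Finally, the claimed bound $\IDiam(\Delta_w)\leq\IDiam(n)$ is stronger than what is needed or obtained; the paper does not preserve the global diameter through the iteration and does not need to.
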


\begin{proof}
Define $f: \N[\frac{1}{4}] \to \N[\frac{1}{4}]$ by $$f(n) = \IDiam(\lceil 2n+1 \rceil) + n + M_\mathcal{P} + 1.$$  Note that $f$ is strictly increasing, since $\IDiam$ is an increasing function.  Note also that $f$ is equivalent to $\IDiam$.  We will show that $f$ is an intrinsic graph tame filling function, and then apply Lemma~\ref{graph tame}. 

Let $w =_G 1$ and let $\Delta_0$ be any van Kampen diagram for $w$.  Let $T_0$ be a tree of $\Delta_0$-geodesics out of $*$ and let $\Psi_{T_0}$ be a 1-combing of $(\Delta_0, \partial \Delta_0)$ based at $*$ respecting $T_0$.  We will construct a sequence $(\Delta_0, T_0, \Psi_{T_0}), (\Delta_1, T_1, \Psi_{T_1}), \dots, (\Delta_n, T_n, \Psi_{T_n})$ where each $\Delta_i$ is a van Kampen diagram for $w$, each $T_i$ is a spanning tree of $\Delta_i$-geodesics out of $*$, and each $\Psi_{T_i}$ is a 1-combing of $(\Delta_i, \partial \Delta_i)$ respecting $T_i$, and then show that $\Psi_{T_n}$ is graph $f$-tame.

Given that we have constructed $\Delta_i$, $T_i$, and $\Psi_{T_i}$, if $\Psi_{T_i}$ is graph $f$-tame, then $n = i$ and we are done.  Otherwise, construct $\Delta_{i+1}$, $T_{i+1}$, and $\Psi_{T_{i+1}}$ as follows.  We will start with the construction of $\Delta_{i+1}$ by finding 1-cells in $\Delta_i$ whose $T_i$-icicles contain points too far away from $*$ for $\Psi_{T_i}$ to be $f$-tame.  We will then replace the bodies of these icicles in $\Delta_i$ with diagrams that have smaller diameter using Proposition~\ref{fat diagrams}; the resulting diagram will be $\Delta_{i+1}$.

Recall the definition of the points prior to $x$ in a 1-combing $\Psi$ of the pair $(C,D)$: 
$$P(\Psi,x) = \{ y \in C | \text{there is } p \in D \text{ and } 0 \leq s \leq t \leq 1 \text{ with } \Psi(p,t) = x \text{ and } \Psi(p,s) = y \}.$$ 
For $x \in \Delta_i^{(1)}$, let $$M_i(x) = \text{max} \{d_{\Delta_i}(*, y) | y \in \Delta_i^{(1)} \cap P(\Psi_{T_i},x) \},$$ the largest distance from $*$ that occurs in the 1-skeleton prior to $x$ in $\Psi_{T_i}$.  Note that, since $\Psi_{T_i}$ is not graph $f$-tame, there exists $x \in \Delta_i^{(1)}$ such that $M_i(x) > f(d_{\Delta_i}(*,x))$.  Then let 
$$N_i = \text{max} \{ d_{\Delta_i}(*,x) | x \in \Delta_i^{(1)} \text{ and } M_i(x) > f(d_{\Delta_i}(*,x)) \}.$$
Note that this maximum exists, since $\Delta_i$ is a finite complex, and therefore distances in the complex are bounded.  Also note that if $x \in T_i$ and $\Psi_{T_i}(p,t) = x$, since $\Psi_{T_i}$ respects $T_i$, we have that for all $s \leq t$, $\Psi_{T_i}(p,s)$ is on the simple path in $T_i$ from $*$ to $x$.  Since this path is a $\Delta_i$-geodesic, every point prior to $x$ is at least as close to $*$ as $x$, so $M_i(x) = d_{\Delta_i}(*,x) < f(d_{\Delta_i}(*,x))$.  Hence, if $d_{\Delta_i}(*,x) = N_i$ and $M_i(x) > f(N_i)$, then $x \not \in T_i$.

Therefore, let 
$$E_i = \left\{ e \in E(\Delta_i \setminus T_i) | d_{\Delta_i}(*,e) = N_i \text{ and there exists } x \in e \text{ such that } M_i(x) > f(N_i)\right\}.$$
Define a partial order $\leq_i$ on $E_i$ by $e \leq_i e'$ if and only if $e$ is contained in the $T_i$-icicle at $e'$ (and therefore the $T_i$-icicle at $e$ is contained in the $T_i$-icicle at $e'$ by Lemma~\ref{Icicle relationships}).  Let $F_i \subseteq E_i$ be the set of maximal elements of $E_i$ with respect to $\leq_i$.  Let $F_i = \{e_1, \dots, e_{m_i}\}$.  It is the bodies of the $T_i$-icicles at these 1-cells that we will replace to construct $\Delta_{i+1}$.

For each $j = 1, \dots, m_i$, let $I_j$ be the $T_i$-icicle at $e_j$ and let $D_j$ be the body of $I_j$.  Let $\alpha_j$ be the tail of $I_j$, and let $*_j$ be the vertex at which the tail and body of $I_j$ meet.  Let $x_j$ and $y_j$ be the endpoints of $e_j$ and let $\vec{e_j}$ be the directed edge from $x_j$ to $y_j$ corresponding to $e_j$.  Let $\beta_{x_j}$ and $\beta_{y_j}$ be the unique simple paths in $T$ from $*_j$ to $x_j$ and $y_j$, respectively.  Let $\gamma_j = \beta_{x_j} \cdot \vec{e_j} \cdot \overline{\beta_{y_j}}$.  Note that $\gamma_j$ is a simple circuit that bounds $D_j$.  Let $w_j$ be the word labeling $\gamma_j$.  Then we may consider $D_j$ to be a van Kampen diagram for $w_j$ with basepoint $*_j$.

Since we know that $D_j$ is simply bounded, let $D_j'$ be a simply and geodesically bounded van Kampen diagram for $w_j$ such that $$\IDiam(D_j') \leq \text{max} \left(\IDiam(\ell(w_j)), \left\lfloor \frac{\ell(w_j)}{2} \right\rfloor + M_\mathcal{P}\right),$$ as is guaranteed to exist by Proposition~\ref{fat diagrams}.

Let $$\widehat{\Delta_i} = \Delta_i \setminus \bigcup_{j \in [m_i]} \interior{D_j}.$$  Note that since each $e \in F_i$ is maximal in $\leq_i$, for distinct 1-cells $e,e' \in F_i$, the interiors of the $T_i$-icicles at $e$ and $e'$ do not intersect by Lemma~\ref{Icicle relationships}.  So if $k \neq j$, $D_k \cap \interior{D_j} = \emptyset$ in $\Delta_i$.  Therefore, for each $j \in [m_i]$, $\partial D_j \subseteq \widehat{\Delta_i}$.

\begin{figure}[t]
    \includegraphics[width=.95\linewidth]{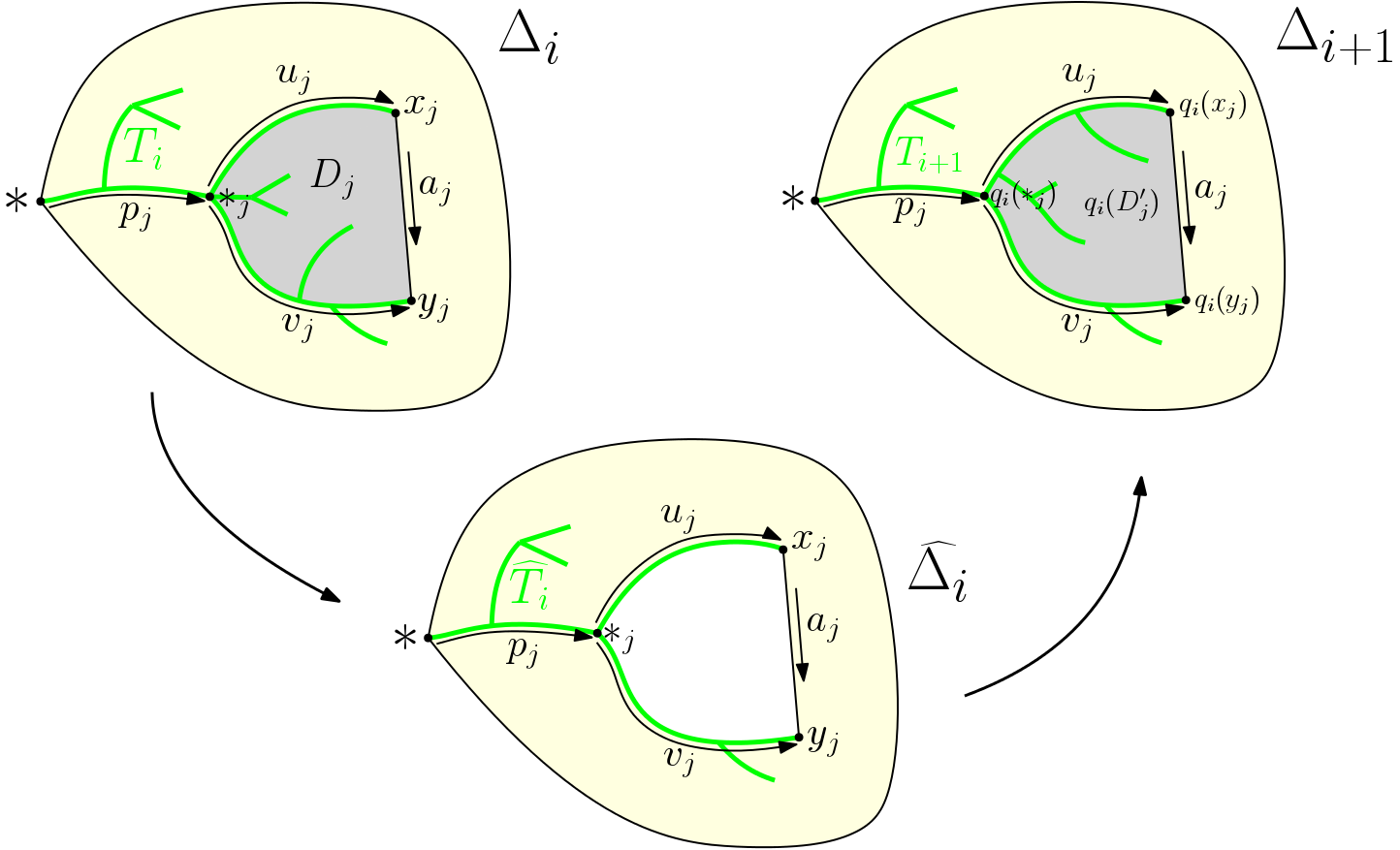}
    \caption{Constructing $\Delta_{i+1}$ and $T_{i+1}$}
    \label{fig:Delta_i+1}
\end{figure}

Construct $\Delta_{i+1}$ from $\widehat{\Delta_i}$ by gluing the basepoint of $D_j'$ to $*_j$ and then gluing the boundary of $D_j'$ to $\gamma_j$, gluing vertices to vertices and 1-cells to 1-cells with the same labels, for each $j \in [m_i]$, as depicted in Figure~\ref{fig:Delta_i+1}.  Let $$q_i: \widehat{\Delta_i} \sqcup \bigsqcup_{j \in [m_i]} D_j' \to \Delta_{i+1}$$ be the corresponding quotient map.  Note that, since each $D_j$ and $D_j'$ are simply bounded, $q_i$ does not identify 1-cells or vertices that were distinct in $\Delta_i$, or in any $D_j'$.  As a result, $\Delta_{i+1}$ is planar and simply connected.  Since we have only replaced a subset of the interior of $\Delta_i$, $\Delta_{i+1}$ is still a van Kampen diagram for $w$.

We now move on to the construction of $T_{i+1}$.  Let $\widehat{T_i} = T_i \cap \widehat{\Delta_i}$.  $T_{i+1}$ will be an extension of $q_i(\widehat{T_i})$.  I claim that $\widehat{T_i}$ is a spanning tree of $\widehat{\Delta_i}$.  Note that since $T_i$ is a tree and $\widehat{T_i} \subseteq T_i$, there are no simple circuits in $\widehat{T_i}$.  Also, since $T_i$ is a spanning tree of $\Delta_i$, $\widehat{T_i}$ contains every vertex in $\widehat{\Delta_i}$.  So we only need to show that $\widehat{T_i}$ is connected.  It suffices to show that, for every $p \in \widehat{\Delta_i}^{(0)}$, there is a path from $*$ to $p$ in $\widehat{T_i}$.  Let $\eta$ be the unique simple path from $*$ to $p$ in $T_i$.  Suppose that for some $j \in [m_i]$, $\eta$ intersects $D_j$.  Then let $q$ be the last vertex of $\eta$ that is in $D_j$.  Since $p \not \in \text{int}(D_j)$, we know that $q \in \partial D_j$.  Without loss of generality, assume that $q$ lies on the simple path from $*$ to $x_j$ in $T_i$.  Therefore, the segment of $\eta$ from $*$ to $q$ is contained in the path from $*$ to $x_j$ in $T_i$, which does not intersect $\text{int}(D_j)$.  Since $q$ is the last vertex of $\eta$ in $D_j$, this implies that $\eta$ does not intersect $\text{int}(D_j)$.  Therefore, $\eta \subseteq \widehat{T_i}$.  

\begin{figure}[h!]
    \includegraphics[width=.6\linewidth]{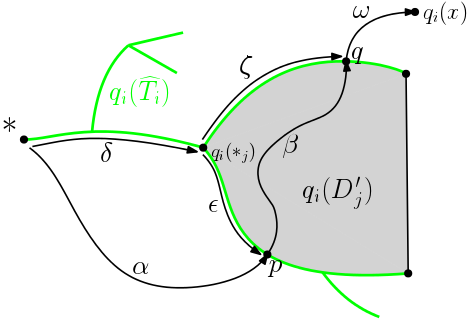}
    \caption{Constructing a $\Delta_{i+1}$-geodesic $\eta$ to stay inside $\widehat{\Delta_i}$ longer than $\gamma$.}
    \label{fig:eta_k+1}
\end{figure}

I also claim that $q_i(\widehat{T_i})$ is a tree of $\Delta_{i+1}$-geodesics out of $*$.  Since $\widehat{T_i}$ is a tree of $\widehat{\Delta_i}$-geodesics out of $*$, it is sufficient to show that for every $x \in \widehat{\Delta_i}^{(0)}$, we have that $d_{\widehat{\Delta_i}}(*,x) \leq d_{\Delta_{i+1}}(*,q_i(x))$.  Suppose by way of contradiction that there is some $x \in \widehat{\Delta_i}^{(0)}$ such that no $\Delta_{i+1}$-geodesic from $*$ to $q_i(x)$ is contained in $q_i(\widehat{\Delta_i})$.  Since $* \in q_i(\widehat{\Delta_i})$, each $\Delta_{i+1}$-geodesic from $*$ to $q_i(x)$ has some initial segment contained in $q_i(\widehat{\Delta_i})$.  Let $\gamma$ be a $\Delta_{i+1}$-geodesic from $*$ to $q_i(x)$ having the longest such initial segment.  Let $\alpha$ be this initial segment of $\gamma$, and let $p$ be its endpoint.  Let $q$ be the next vertex in $q_i(\widehat{\Delta_i})$ on $\gamma$ after $p$, let $\beta$ be the segment of $\gamma$ from $p$ to $q$, and let $\omega$ be the final segment of $\gamma$ from $q$ to $q_i(x)$.  Note that by assumption $p \neq q$, so $|\beta| \ge 1$ and all but the endpoints of $\beta$ are not contained in $q_i(\widehat{\Delta_i})$.  Hence, $\beta \subseteq q_i(D_j')$ for some $j \in [m]$ and $p,q \in q_i(\partial D_j')$.  Let $\delta$ be the simple path from $*$ to $q_i(*_j)$ in $q_i(\widehat{T_i})$ and let $\epsilon$ be the simple path from $q_i(*_j)$ to $p$ in $q_i(\widehat{T_i})$.  Finally, since $D_j'$ is geodesically bounded, there is a $q_i(D_j')$-geodesic $\zeta$ from $q_i(*_j)$ to $q$ in $q_i(\partial D_j')$.  See Figure~\ref{fig:eta_k+1}.  Now let $\eta = \delta \cdot \zeta \cdot \omega$. Note that $|\alpha| = |\delta \cdot \epsilon|$, since $\delta \cdot \epsilon$ is the simple path from $*$ to $p$ in $q_i(\widehat{T_i})$, and therefore a $q_i(\widehat{\Delta_i})$-geodesic from $*$ to $p$.  Also, $|\epsilon \cdot \beta| \ge |\zeta|$, since $\epsilon \cdot \beta$ is a path from $*_j$ to $q$ in $q_i(D_j')$.  Therefore, 
$$|\gamma| = |\alpha \cdot \beta \cdot \omega| = |\delta \cdot \epsilon \cdot \beta \cdot \omega| \ge |\delta \cdot \zeta \cdot \omega| = |\eta|.$$
Hence, $\eta$ is also a $\Delta_{i+1}$-geodesic from $*$ to $x$.  Now, the initial segment of $\eta$ contained in $q_i(\widehat{\Delta_i})$ contains $\delta \cdot \zeta$, a $\Delta_{i+1}$-geodesic from $*$ to $q$.  Since $\alpha \cdot \beta$ is also a $\Delta_{i+1}$-geodesic from $*$ to $q$ and $|\beta| \ge 1$, we have that $\delta \cdot \zeta$ is strictly longer than $\alpha$.  So $\eta$'s initial segment inside $q_i(\widehat{\Delta_i})$ is longer than that of $\gamma$.  This contradicts the way that $\gamma$ was chosen.  Therefore, for all $x \in \widehat{\Delta_i}^{(0)}$, there is a $\Delta_{i+1}$-geodesic from $*$ to $q_i(x)$ contained in $q_i(\widehat{\Delta_i})$.  So $d_{\widehat{\Delta_i}}(*,x) \leq d_{\Delta_{i+1}}(*,q_i(x))$ as desired.

\begin{figure}[h!]
    \includegraphics[width=.9\linewidth]{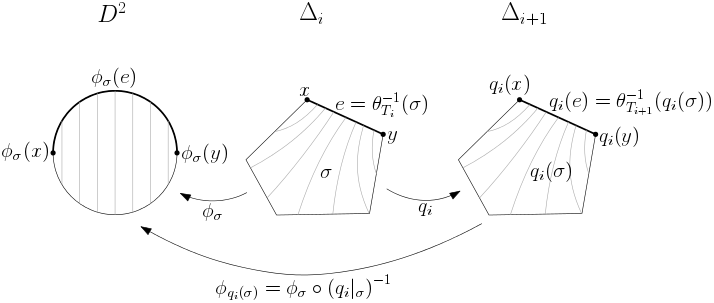}
    \caption{Choosing $\phi_{q_i(\sigma)}$ for a 2-cell $\sigma$ in $\widehat{\Delta_i}$.  The lighter curves depict some combing paths of $\Psi_{T_i}$ and their images.  As a result of this choice, the combing paths of $\Psi_{T_{i+1}}$ on $q_i(\sigma)$ will agree with those of $q_i \circ \Psi_{T_i}$.}
    \label{fig:Choosing Psi_T_i+1}
\end{figure}

Finally, let $T_{i+1}$ be an extension of $q_i(\widehat{T_i})$ to a geodesic spanning tree of $\Delta_{i+1}$.  Now we will choose $\Psi_{T_{i+1}}$ to be a particular 1-combing of $(\Delta_{i+1}, \partial \Delta_{i+1})$ based at $*$ respecting $T_{i+1}$, as guaranteed to exist by Proposition~\ref{Nice 1-combings exist}.  I claim that we may construct $\Psi_{T_{i+1}}$ such that, for all $x \in \partial \Delta_i$ and $t \in [0,1]$ with $\Psi_{T_{i+1}}(q_i(x),[t,1]) \subset q_i(\widehat{\Delta_i})$, there exists $t' \in [0,1]$ such that $\Psi_{T_{i+1}}(q_i(x),[t,1]) = q_i(\Psi_{T_i}(x,[t',1]))$. In other words, the images of the combing paths of $q_i \circ \Psi_{T_i}$ and $\Psi_{T_{i+1}}$ agree on end portions that stay in $q_i(\widehat{\Delta_i})$.  This will be useful for showing that $\Psi_{T_{i+1}}$ retains the progress that $\Psi_{T_i}$ has made towards achieving $f$-tameness.

To choose $\Psi_{T_{i+1}}$ in this way, recall that the images of individual combing paths of the 1-combings constructed in Section~\ref{1-combings section} are determined by the bijection $\theta_T$ between 1-cells outside of the spanning tree $T$ and 2-cells, as well as a choice of homeomorphism from each 2-cell $\sigma$ to $D^2$ that maps the edge $\theta_T^{-1}(\sigma)$ to $S^1_+$.  Since we chose $T_{i+1}$ to agree with $q_i(T_i)$ on $\widehat{\Delta_i}$, for any 1-cell $e \in E(\widehat{\Delta_i} \setminus T_i)$, we have that $q_i(e) \not \in T_{i+1}$ and $q_i(\partial I_e) = \partial I_{q_i(e)}$.  Therefore, for any 2-cell $\sigma \subseteq \widehat{\Delta_i}$, if $\sigma \subseteq I_e$, then $q_i(\sigma) \subseteq I_{q_i(e)}$.  Hence, $$q_i \circ \theta_{T_i}|_{E(\widehat{\Delta_i} \setminus T_i) \setminus F_i} = \theta_{T_{i+1}} \circ q_i|_{E(\widehat{\Delta_i} \setminus T_i) \setminus F_i}.$$  As a result, for each 2-cell $\sigma \subseteq \widehat{\Delta_i}$, if $\phi_\sigma: \sigma \to D^2$ is the homeomorphism chosen for $\sigma$ in the construction of $\Psi_{T_i}$, we may choose $\phi_{q_i(\sigma)} = \phi_\sigma \circ (q_i|_{\sigma})^{-1}$ as the homeomorphism from $q_i(\sigma)$ to $D^2$ in the construction of $\Psi_{T_{i+1}}$.  Figure~\ref{fig:Choosing Psi_T_i+1} represents this choice of $\phi_{q_i(\sigma)}$ pictorially.  Having chosen these homeomorphisms, the images of the combing paths of $\Psi_{T_{i+1}}$ on $q_i(\widehat{\Delta_i})$ are determined, and we are free to make any other choices necessary to finish the construction of $\Psi_{T_{i+1}}$ as in Section~\ref{1-combings section}.

Having chosen $\Psi_{T_{i+1}}$, let $x \in \partial \Delta_i$ and $t \in [0,1]$ with $\Psi_{T_{i+1}}(q_i(x),[t,1]) \subset q_i(\widehat{\Delta_i})$.  We need to show that there exists $t' \in [0,1]$ such that $\Psi_{T_{i+1}}(q_i(x),[t,1]) = q_i(\Psi_{T_i}(x,[t',1]))$.  

By Lemma~\ref{paths improved}, we know there exists a sequence $1 \ge t_1 > \dots > t_{m+1} > t_{m+2} = 0$ and a sequence of 1-cells $e_1, e_2, \dots, e_m$ in $\Delta_{i+1} \setminus T_{i+1}$ such that, if we let $x_j = \Psi_{T_{i+1}}(q_i(x),t_j)$ and $\sigma_j = \theta_{T_{i+1}}(e_j)$, we have that 

\begin{itemize}
\item $\Psi_{T_{i+1}}(q_i(x),[0, t_{m+1}])$ is the simple path from $*$ to $x_m$ in $T_{i+1}$,
\item $\Psi_{T_{i+1}}(q_i(x),[t_1,1]) = \{q_i(x)\}$, and
\item for all $j \in [m]$,

\begin{itemize}
\item $x_j \in \interior{e_j}$,
\item $t_j = \text{min}\{s \in [0,1] | \Psi_{T_{i+1}}(q_i(x),s) = x_j \}$, and
\item $\Psi_{T_{i+1}}(q_i(x),[t_{j+1},t_j]) = \phi_{\sigma_j}^{-1}(h(\phi_{\sigma_j}(x_j),[0,1]))$.
\end{itemize}

\end{itemize} 

Similarly, there is a sequence $1 \ge t_1' > \dots > t_{l+1}' > t_{l+2}' = 0$ and a sequence of 1-cells $e_1', e_2', \dots, e_l'$ in $\Delta_i \setminus T_i$ such that, if we let $x_j' = \Psi_{T_i}(x,t_j')$ and $\sigma_j' = \theta_{T_i}(e_j')$, we have that 

\vbox{%
\begin{itemize}
\item $\Psi_{T_{i}}(x,[0, t_{m+1}'])$ is the simple path from $*$ to $x_m'$ in $T_{i}$,
\item $\Psi_{T_{i}}(x,[t_1',1]) = \{x\}$, and
\item for all $j \in [l]$,
\begin{itemize}
\item $x_j' \in \interior{e_j'}$,
\item $t_j' = \text{min}\{s \in [0,1] | \Psi_{T_{i}}(x,s) = x_j' \}$, and
\item $\Psi_{T_{i}}(x,[t_{j+1}',t_j']) = \phi_{\sigma_j'}^{-1}(h(\phi_{\sigma_j'}(x_j'),[0,1]))$.
\end{itemize}
\end{itemize}}

\begin{figure}[h!]
    \includegraphics[width=.9\linewidth]{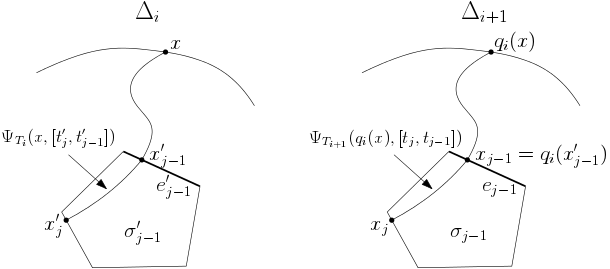}
    \caption{The situation at step $j$ of the induction, demonstrating that $\Psi_{T_{i+1}}(q_i(x),[t_j,1]) = q_i(\Psi_{T_i}(x,[t_j',1])).$}
    \label{fig:induction setup}
\end{figure}

If $t_1 \leq t \leq 1$, then we may simply let $t' = 1$.  Then $$\Psi_{T_{i+1}}(q_i(x),[t,1]) = \{q_i(x)\} = q_i(\Psi_{T_i}(x,[t',1])).$$  Otherwise, there is some $k \in [m+1]$ such that $t_{k+1} \leq t < t_k$.  For $j \in [k]$, suppose by induction that $\Psi_{T_{i+1}}(q_i(x),[t_{j-1},1]) = q_i(\Psi_{T_i}(x,[t_{j-1}',1]))$ and that $x_{j-1} = q_i(x_{j-1}')$.  This setup is shown in Figure~\ref{fig:induction setup}.  Then $e_{j-1} = q_i(e_{j-1}')$.  Since $\theta_{T_{i+1}}(e_{j-1}) = \sigma_{j-1}$ and $t_{j-1}$ is the smallest time with $\Psi_{T_{i+1}}(q_i(x),t_{j-1}) = x_{j-1}$, we must have that $\interior{\sigma_{j-1}} \cap \Psi_{T_{i+1}}(q_i(x),[t_j,t_{j-1}]) \neq \emptyset$.  We know that $\Psi_{T_{i+1}}(q_i(x),[t_j,t_{j-1}]) \subseteq q_i(\widehat{\Delta_i})$, so we must have $\sigma_{j-1} \subseteq q_i(\widehat{\Delta_i})$.  Hence, $$\sigma_{j-1} = \theta_{T_{i+1}}(e_{j-1}) = \theta_{T_{i+1}}(q_i(e_{j-1}')) = q_i(\theta_{T_i}(e_{j-1}')) = q_i(\sigma_{j-1}').$$  Then since we chose $\phi_{\sigma_{j-1}} = \phi_{\sigma_{j-1}'} \circ (q_i|_{\sigma_{j-1}'})^{-1}$ in the construction of $\Psi_{T_{i+1}}$,

\begin{align*}
\Psi_{T_{i+1}}(q_i(x),[t_j,t_{j-1}]) &= \phi_{\sigma_{j-1}}^{-1}( h( \phi_{\sigma_{j-1}}(x_{j-1}), [0,1] ) ) \\
&= q_i( \phi_{\sigma_{j-1}'}^{-1}( h( \phi_{\sigma_{j-1}'}(x_{j-1}'), [0,1] ) ) ) \\
&= q_i( \Psi_{T_i}(x, [t_j',t_{j-1}'])).
\end{align*}

Combining this result with the induction hypothesis gives that $$\Psi_{T_{i+1}}(q_i(x),[t_j,1]) = q_i(\Psi_{T_i}(x,[t_j',1])).$$  Also, 

\begin{align*}
x_j = \Psi_{T_{i+1}}(q_i(x),t_j) &= \phi_{\sigma_{j-1}}^{-1}( h( \phi_{\sigma_{j-1}}(x_{j-1}), 1 ) ) \\
&= q_i( \phi_{\sigma_{j-1}'}^{-1}( h( \phi_{\sigma_{j-1}'}(x_{j-1}'), 1 ) ) ) \\
&= q_i( \Psi_{T_i}(x, t_j')) = q_i(x_j').
\end{align*}

This completes the induction, establishing that $\Psi_{T_{i+1}}(q_i(x),[t_k,1]) = q_i(\Psi_{T_i}(x,[t_k',1]))$ and that $x_k = q_i(x_k')$.

Suppose $k \leq m$. Since $x_k = q_i(x_k')$, we again know that $e_k = q_i(e_k')$. Now since $t_k = \text{min}\{s \in [0,1] | \Psi_{T_{i+1}}(q_i(x),s) = x_k \}$ and $t < t_k$, we know that $\Psi_{T_{i+1}}(q_i(x),t) \neq x_k$.  Hence, $\Psi_{T_{i+1}}(q_i(x),[t,t_k])$ contains a point in $\interior{\sigma_k}$.  Just as above, this implies that $\sigma_k = q_i(\sigma_k')$ and therefore $\phi_{\sigma_k} = \phi_{\sigma_k'} \circ (q_i|_{\sigma_k'})^{-1}$.  Hence, 

\begin{align*}
\Psi_{T_{i+1}}(q_i(x),[t,t_k]) &= \phi_{\sigma_k}^{-1}( h( \phi_{\sigma_k}(x_{k-1}), [0,s] ) )  \text{, for some } s \in (0,1],\\
&= q_i( \phi_{\sigma_k'}^{-1}( h( \phi_{\sigma_k'}(x_{k-1}'), [0,s] ) ) ) \\
&= q_i( \Psi_{T_i}(x, [t',t_k'])),
\end{align*}
where $t' \in [t_{k+1}', t_k')$ such that $\phi_{\sigma_k'}( \Psi_{T_i}(x,t') ) = h( \phi_{\sigma_k'}(x_{k-1}'), s) = \phi_{\sigma_k}( \Psi_{T_{i+1}}(q_i(x),t) )$.

If instead $k = m+1$, then $\Psi_{T_{i+1}}(q_i(x),[0,t_k])$ is the unique simple path from $x_{m+2} = *$ to $x_{m+1}$ in $T_{i+1}$, and $y := \Psi_{T_{i+1}}(q_i(x),t)$ is a point along that path.  Since $x_{m+1} \in q_i(\widehat{\Delta_i})$ and we chose $T_{i+1} \subseteq q_i(\widehat{T_i})$, we have that $\Psi_{T_{i+1}}(q_i(x),[0,t_k]) \subseteq q_i(\widehat{\Delta_i})$.  Let $y' = (q_i|_{\widehat{\Delta_i}})^{-1}(y)$.  Again since $T_{i+1} \subseteq q_i(\widehat{T_i})$, we have that $y'$ is on that unique simple path from $*$ to $x_{m+1}'$, so $y' \in \Psi_{T_i}(x,[0,t_k'])$.  So let $t' \in [0,t_k']$ such that $\Psi_{T_i}(x, t') = y'$.  Then $\Psi_{T_{i+1}}(q_i(x),[t,t_k]) = q_i( \Psi_{T_{i+1}}(x,[t',t_k']) )$.

So in either case, there is some $t' \in [t_{k+1}',t_k']$ such that $$\Psi_{T_{i+1}}(q_i(x),[t,t_k]) = q_i( \Psi_{T_{i+1}}(x,[t',t_k']) ).$$  Therefore, 

\begin{align*}
\Psi_{T_{i+1}}(q_i(x),[t,1]) &= \Psi_{T_{i+1}}(q_i(x),[t,t_k]) \cup \Psi_{T_{i+1}}(q_i(x),[t_k,1]) \\
&= q_i( \Psi_{T_i}(x,[t',t_k']) ) \cup q_i( \Psi_{T_i}(x,[t_k',1]) ) \\
&= q_i( \Psi_{T_i}(x, [t',1]) ).
\end{align*}

So $\Psi_{T_{i+1}}$ has the desired property.

Our aim will now be to show that, if $\Psi_{T_{i+1}}$ is not graph $f$-tame, then $N_{i+1} < N_i$.  Recall that $N_i$ is the greatest distance from the basepoint of any $x \in \Delta_i^{(1)}$ such that $x$ demonstrates that $\Psi_{T_i}$ is not $f$-tame, i.e., such that there is a point $y$ prior to $x$ in $\Psi_{T_i}$ with $d_{\Delta_i}(*,y) > f(d_{\Delta_i}(*,x))$.  If the sequence $N$ is strictly decreasing, then at some point there will be no such $x$, and we will be left with a graph $f$-tame 1-combing.

Recall that each $e_j$ is a 1-cell in $\Delta_i$ such that points in the interior of $e_j$ are a distance $N_i$ away from $*$ and there is a point prior to $e_j$---and therefore, in the icicle at $e_j$---of distance greater than $f(N_i)$ away from $*$.  I claim that these obstructions to $f$-tameness have been removed in $\Delta_{i+1}$: that for each $j \in [m_i]$, every $y \in \Delta_{i+1}^{(1)}$ in the $T_{i+1}$-icicle at $q_i(e_j)$ has $d_{\Delta_{i+1}}(*,y) \leq f(N_i)$.

For $j \in [m_i]$, we know that 
$$\IDiam(D_j') \leq \text{max} \left( \IDiam(\ell(w_j)), \left\lfloor \frac{\ell(w_j)}{2} \right\rfloor + M_\mathcal{P} \right).$$
Now recall that $\alpha_j$ is the tail of the icicle at $e_j$ and that $\gamma_j = \beta_{x_j} \cdot \vec{e_j} \cdot \overline{\beta_{y_j}}$ bounds the body of that icicle.  Also recall that $N_i \in \N\left[\frac{1}{2}\right]$.  Since $d_{\Delta_{i+1}}(*,q_i(e_j)) = N_i$ and $q_i(\alpha_j \cdot \beta_{x_j})$ and $q_i(\alpha_j \cdot \beta_{y_j})$ are $\Delta_{i+1}$-geodesics, $|\alpha_j \cdot \gamma_j \cdot \overline{\alpha_j}| = |\alpha_j \cdot \beta_{x_j} \cdot \vec{e_j} \cdot \overline{\beta_{y_j}} \cdot \overline{\alpha_j}| \leq 2N_i + 1$.  So $\ell(w_j) = |\gamma_j| \leq 2N_i + 1 - 2|\alpha_j|$.  Therefore, 
\begin{align*}
\text{max} & \left(\IDiam(\ell(w_j)), \left\lfloor \frac{\ell(w_j)}{2} \right\rfloor + M_\mathcal{P} \right) \\
&\leq \IDiam(\ell(w_j)) + \left\lfloor \frac{\ell(w_j)}{2} \right\rfloor + M_\mathcal{P} \\
&\leq \IDiam( 2N_i + 1 - 2|\alpha_j| ) + \left\lfloor \frac{2N_i + 1 - 2|\alpha_j|}{2} \right\rfloor + M_\mathcal{P} \\
&\leq \IDiam( 2N_i + 1 ) + N_i + \frac{1}{2} - |\alpha_j| + M_\mathcal{P}.
\end{align*}
Now let $y'$ be a vertex in the $T_{i+1}$-icicle at $q_i(e_j)$ in $\Delta_{i+1}$.  If $y'$ is on the simple path from $*$ to $q_i(*_j)$ in $T_{i+1}$, then $d_{\Delta_{i+1}}(*,y') \leq N_i$.  Otherwise, since $q_i(*_j)$ is on the simple path from $*$ to $y'$ in $T_{i+1}$\label{Icicle 3}, 
\begin{align*}
d_{\Delta_{i+1}}(*,y') &= d_{\Delta_{i+1}}(*,q_i(*_j)) + d_{\Delta_{i+1}}(q_i(*_j),y') \\
&\leq |\alpha_j| + \IDiam(D_j') \\
&\leq |\alpha_j| + \IDiam( 2N_i + 1 ) + N_i + \frac{1}{2} - |\alpha_j| + M_\mathcal{P} \\
&= \IDiam( 2N_i + 1 ) + N_i + \frac{1}{2} + M_\mathcal{P} = f(N_i) - \frac{1}{2}.
\end{align*}
Every point on a 1-cell in the icicle has coarse distance differing by $\frac{1}{2}$ from some vertex in the icicle, so for all $y \in \Delta_{i+1}^{(1)}$ in the $T_{i+1}$-icicle at $q_i(e_j)$, 
\begin{equation}\label{Icicle diameter inequality}
d_{\Delta_{i+1}}(*,y) \leq \text{max}(N_i + \frac{1}{2}, f(N_i)) = f(N_i)
\end{equation}

Now if $\Psi_{T_{i+1}}$ is not graph $f$-tame, then there exists some $x \in \Delta_{i+1}^{(1)}$ such that $M_{i+1}(x) > f(d_{\Delta_{i+1}}(*,x))$.  Then let $p \in \partial \Delta_{i+1}$ and $s,t \in [0,1]$ such that $s < t$, $\Psi_{T_{i+1}}(p,t) = x$, $y := \Psi_{T_{i+1}}(p,s) \in \Delta_{i+1}^{(1)}$, and $d_{\Delta_{i+1}}(*,y) = M_{i+1}(x)$.  I claim that $d_{\Delta_{i+1}}(*,x) < N_i$, which will be sufficient to show that $N_{i+1} < N_i$.  We will prove this by considering two cases.

Case 1: $y$ is in the $T_{i+1}$-icicle at $q_i(e_j)$ for some $j \in [m_i]$.  Then by inequality ~\ref{Icicle diameter inequality}, $d_{\Delta_{i+1}}(*,y) \leq f(N_i)$.  Since we chose $x$ and $y$ such that $f(d_{\Delta_{i+1}}(*,x)) < M_{i+1}(x) = d_{\Delta_{i+1}}(*,y)$, this shows that $f(d_{\Delta_{i+1}}(*,x)) < f(N_i)$.  Since $f$ is a strictly increasing function, this implies that $d_{\Delta_{i+1}}(*,x) < N_i$.

Case 2: $y$ is not in a $T_{i+1}$-icicle at $q_i(e_j)$ for any $j \in [m_i]$.  Then since $\Psi_{T_{i+1}}$ respects $T_{i+1}$, we know that for all $r \in [s, 1]$, $\Psi_{T_{i+1}}(p,r)$ is also not in the $T_{i+1}$-icicle at $q_i(e_j)$ for any $j \in [m_i]$.  So $\Psi_{T_{i+1}}(p,[s,1]) \subseteq q_i(\widehat{\Delta_i})$.  Based on the way that we chose $\Psi_{T_{i+1}}$ such that its combing paths agree with those of $q_i \circ \Psi_{T_i}$ on $q_i(\widehat{\Delta_i})$, we know that there exist $p' \in \partial \Delta_i$ and $s' \in [0,1]$ such that $q_i(\Psi_{T_i}(p',[s',1])) = \Psi_{T_{i+1}}(p,[s,1])$.  In particular, $q_i(p') = p$, $q_i(\Psi_{T_i}(p',s')) = y$, and there exists $t' \in [s',1]$ such that $q_i(\Psi_{T_i}(p',t')) = x$.  For ease of notation, let $y' = \Psi_{T_i}(p',s')$ and let $x' = \Psi_{T_i}(p',t')$.  Now we know that $d_{\Delta_i}(*,x') = d_{\Delta_{i+1}}(*,x)$ and $d_{\Delta_i}(*,y') = d_{\Delta_{i+1}}(*,y)$.  Since $y' \in P(\Psi_{T_i},x')$, we have that
$$M_i(x') \geq d_{\Delta_i}(*,y') = d_{\Delta_{i+1}}(*,y) = M_{i+1}(x) > f(d_{\Delta_{i+1}}(*,x)) = f(d_{\Delta_i}(*,x')).$$

This implies that $d_{\Delta_i}(*,x') \leq N_i$, by the definition of $N_i$.  But if $d_{\Delta_i}(*,x') = N_i$, then $x'$ would be on some 1-cell in $E_i$, and therefore in the $T_i$-icicle at $e_j$ for some $j \in [m_i]$.  Since $T_{i+1} \cap q_i(\widehat{\Delta_i}) = q_i(\widehat{T_i})$, this would imply that $x$ is in the $T_{i+1}$-icicle at $q_i(e_j)$, which is a contradiction.  Therefore, $d_{\Delta_{i+1}}(*,x) = d_{\Delta_i}(*,x') < N_i$.  This concludes case 2 and proves that $d_{\Delta_{i+1}}(*,x) < N_i$.

Since $N_{i+1}$ is the maximum of $d_{\Delta_{i+1}}(*,x)$ for all such $x$, this implies that $N_{i+1} < N_i$.  But for all $i$ such that $N_i$ is defined, $N_i > 0$.  Therefore, this sequence $(\Delta_i,T_i,\Psi_{T_i})$ must end.  Based on the way the sequence was constructed, this implies that for some $n \in \N$, $\Psi_{T_n}$ is graph $f$-tame.  Since $w$ was an arbitrary word with $w =_G 1$ and there exists a van Kampen diagram $\Delta_n$ for $w$ with a graph $f$-tame 1-combing, $f$ is an intrinsic graph tame filling function for $\langle A | R \rangle$.  Then by Lemma~\ref{graph tame}, $f$ is equivalent to an intrinsic tame filling function for $\mathcal{P}$.  Since this equivalence of functions is transitive, there is an intrinsic tame filling function for $\mathcal{P}$ that is equivalent to $\IDiam$.

\end{proof}

\begin{remark} Unfortunately, a similar proof strategy for the extrinsic version of this theorem would fall apart.  It is necessary to choose the spanning trees $T_i$ to be trees of $\Delta_i$-geodesics out of $*$.  The fact that the paths used are $\Delta_i$-geodesics is crucial to attain inequality ~\ref{Icicle diameter inequality}.  Reformulating this inequality for the extrinsic case would require paths that are geodesics in the Cayley graph, since extrinsic diameter is measured using distance in the Cayley graph.  Since it is known that the intrinsic diameter functions of some groups grow strictly faster then their extrinsic diameter functions, it is not possible in general to find van Kampen diagrams with spanning trees of Cayley graph geodesics.  In fact, in the extrinsic case, it is not even clear if it is possible in general to find a filling with an $f$-tame path to each vertex of each diagram, no matter how fast the function $f$ grows; in the intrinsic case, optimally tame paths to each vertex are simply handed to us in the form of geodesics in the diagram.
\end{remark}

\section{Other Possible Refinements of the Intrinsic Diameter Function}
Given that intrinsic tame filling functions do not provide a proper refinement of the invariant given by intrinsic diameter functions, we are left with the question of whether or not there is any way to refine this invariant by measuring something along the same lines as intrinsic tame filling functions.  This statement is intentionally vague, given that an intuitive sense of ``what intrinsic tame filling functions measure" is rather subjective.  In this section, we suggest two possible refinements motivated by different perspectives on what intrinsic tame filling functions measure.

One can consider a 1-combing to describe a particular homotopy between the boundary circuit of a van Kampen diagram and the basepoint.  Since van Kampen diagrams embed into the Cayley complex of a presentation, so does this homotopy.  Then a choice of a van Kampen diagram and 1-combing for every word equal to the identity in the group gives us a homotopy in the Cayley complex of every loop in the Cayley graph down to the identity vertex.  We can then think of an intrinsic tame filling function as a bound for how far away from the identity a point on a loop is allowed to travel after it has already been brought within a given distance of the identity by the chosen homotopy (where distance here is measured within the image of the homotopy, not within the Cayley complex itself).  In other words, if the homotopy brings a point within some distance of the identity---its ultimate destination---the intrinsic tame filling function limits how far the homotopy is allowed to push that point away from the identity from then on.
 
From this perspective, we may strengthen the notion of intrinsic tame filling function by placing more firm restrictions on what the homotopy is allowed to do to points that have gotten within a certain distance of the identity.  Below, we define a potential filling function by restricting the total change in ``elevation"---in other words, distance from the basepoint---such a point is allowed to accumulate before it reaches the identity, rather than just restricting its maximum distance from the identity.

\begin{Def} Given a van Kampen diagram $\Delta$, a $1$-combing $\Psi$ of $(\Delta, \partial \Delta)$, $x \in \partial \Delta$, and $t \in [0,1]$, let $V(\Psi,x,t)$ be the total variation of the function $d := d_\Delta(*,\Psi(x,\cdot)): [0,1] \to \N[\frac{1}{4}]$ on the interval $[0,t]$.  To be precise, if there is a finite sequence $0 = t_1 < t_2 < \dots < t_k = t$ such that for all $i \in [k-1]$ we have that $d(t_{i+1}) \neq d(t_i)$ and for all $s \in (t_i, t_{i+1})$ either $d([t_i,s]) = \{d(t_i)\}$ or $d([s,t_{i+1}]) = \{d(t_{i+1})\}$, then 
$$V(\Psi,x,t) = \sum_{i=1}^{k-1} |d(t_{i+1}) - d(t_i)|.$$
Otherwise, $V(\Psi,x,t) = \infty$.
\end{Def}

\begin{Def} Let $f: \N[\frac{1}{4}] \to \N[\frac{1}{4}]$ be a function.  A 1-combing $\Psi$ of $(\Delta, \partial \Delta)$ is \emph{variation $f$-tame} if for all $n \in \N[\frac{1}{4}]$ and for all $x \in \partial \Delta$ and $t \in [0,1]$, if $d_{\Delta}(*,\Psi(x,t)) \leq n$, then $V(\Psi,x,t) \leq f(n)$.
\end{Def}

\begin{Def} Let $f: \N[\frac{1}{4}] \to \N[\frac{1}{4}]$ be non-decreasing.  $f$ is an \emph{intrinsic variation-tame filling function} for $\langle A | R \rangle$ if, for all $w \in (A \cup A^{-1})^*$ with $w =_G 1$, there is a van Kampen diagram $\Delta_w$ for $w$ and a 1-combing $\Psi_w$ of $\Delta_w$ such that $\Psi_w$ is variation $f$-tame.
\end{Def}

Note that for any $\Delta$, $\Psi$, $x$, and $t$ as in the definition of $V(\Psi,x,t)$, we have that for any $s \leq t$, $d_\Delta(*,\Psi(x,s)) \leq V(\Psi,x,t)$, and therefore any intrinsic variation-tame filling function is an intrinsic tame filling function that uses the same choice of $\Delta_w$ and $\Psi_w$ for each $w \in (A \cup A^{-1})^*$ with $w =_G 1$.\\

The second possible refinement comes from a completely different perspective on what intrinsic tame filling functions measure.  The proof of Theorem~\ref{main} essentially comes down to two facts:
\begin{enumerate}
\item For any geodesic spanning tree of a van Kampen diagram, there is a 1-combing that is $f$-tame where $f(n) = \text{max}\{\IDiam(I_e) | d_\Delta(*,e) \leq n\}$.  In a vague sense, the 1-combing is as tame as the diameter of the icicles of the tree.
\item It is possible to replace the icicles with versions of themselves that have almost minimal diameter.
\end{enumerate}
From a certain perspective, these facts imply that the 1-combing portion of the definition of intrinsic tame filling function is superfluous; there is a particular type of 1-combing that we can restrict ourselves to and acquire intrinsic tame filling functions that grow as slowly as possible, and their tameness can be measured by the diameter of the icicles of the tree without talking about 1-combings at all.  In other words, defining intrinsic tame filling functions based only on diameters of icicles would result in exactly the same group invariant.

From this perspective, the proof of (2) works because icicles of the same tree intersect each other so nicely.  For any two icicles, either one is inside the other or they do not intersect (besides at their boundary).  This makes it easier to find a good order in which to replace icicles without having to worry about intersections messing up an area that has already been fixed; just replace bigger icicles first.  This way, there is no possibility that you ruin the nice diameter of a previously-replaced icicle by replacing the next icicle; if you later replace any part of a previously-replaced icicle, you must be replacing some icicle entirely contained within it, and this can never increase the diameter of the previously-replaced icicle.

This gives us a second way to try to strengthen the notion of intrinsic tame filling functions.  We could define new types of filling functions based on the diameters of a set of subcomplexes that need not intersect each other nicely, with the intention of breaking the proof of (2).  We have defined such a type of filling function below, simply using the set of all subcomplexes that are themselves van Kampen diagrams.

Since subcomplexes do not come equipped with a basepoint, it will first be convenient to define a type of diameter that does not take a basepoint into account.  Given a 2-complex $C$, the \emph{unbased intrinsic diameter} of $C$ is $$\overline{\IDiam}(C) = \text{max}\{d_{C}(x,y) | x,y \in C^{(0)}\}.$$

\begin{Def} Given a van Kampen diagram $\Delta$, a \emph{subdiagram} of $\Delta$ is a simply-connected subcomplex of $\Delta$.  Note that any subdiagram $D$ of a van Kampen diagram can itself be thought of as a van Kampen diagram without a chosen basepoint.  It will be useful to let $|\partial D|$ refer to the length of the boundary circuit of $D$ without specifying a basepoint.
\end{Def}

\begin{Def} Given a function $f: \N \to \N$ and a van Kampen diagram $\Delta$, $f$ is an \emph{intrinsic subdiagram diameter function} for $\Delta$ if, for every subdiagram $D$ of $\Delta$, $\overline{\IDiam}(D) \leq f(|\partial D|)$.
\end{Def}

\begin{Def} A non-decreasing function $f: \N \to \N$ is an \emph{intrinsic subdiagram diameter function} for $\langle A | R \rangle$ if, for all $w \in (A \cup A^{-1})^*$ with $w =_G 1$, there is a van Kampen diagram $\Delta_w$ for $w$ such that $f$ is a local intrinsic diameter function for $\Delta_w$.
\end{Def}

Note that any intrinsic subdiagram diameter function for a presentation is bounded below by the intrinsic diameter function of the presentation, since every van Kampen diagram is a subdiagram of itself.\\

It is left to future work to determine if either of these new potential filling functions are quasi-isometry invariants, and whether or not they can distinguish between groups with equivalent intrinsic diameter functions.  

\bigskip

\bibliographystyle{plain}
\bibliography{ITFF_equivalent_to_IDiam}

\bigskip
\bigskip

\small{
\textsc{Andrew Hayes,
Department of Mathematics,
University of Nebraska,
Lincoln, NE 68588-0130, USA}, \texttt{ahayes11@unl.edu}}

\end{document}